\newcommand{\ds}{\displaystyle}
\newtheorem{thm}{Theorem}[section]
\newcommand{\ttau}{\tilde{\tau}}
\newcommand{\otau}{\overline{\tau}}
\begin{document}

\title{Stability of coupled Wilson-Cowan systems with distributed delays}

\author{Eva Kaslik$^{1}$ \and Emanuel-Attila K\"ok\"ovics$^1$ \and
Anca R\u adulescu$^{2,*}$
}

\date{\noindent $^1$ West University of Timi\c{s}oara, Bd.  V. P\^{a}rvan nr. 4, 300223, Timi\c{s}oara, Romania\\
\noindent $^2$ State University of New York at New Paltz, New Paltz, NY 12561, USA\\ 
$^*$ Corresponding Author: {\it radulesa@newpaltz.edu}}

\maketitle

\begin{abstract}
\noindent Building upon our previous work on the Wilson-Cowan equations with distributed delays, we study the dynamic behavior in a system of two coupled Wilson-Cowan pairs. We focus in particular on understanding the mechanisms that govern the transitions in and out of oscillatory regimes associated with pathological behavior. We investigate these mechanisms under multiple coupling scenarios, and we compare the effects of using discrete delays versus a weak Gamma delay distribution. We found that, in order to trigger and stop oscillations, each kernel emphasizes different critical combinations of coupling weights and time delay, with the weak Gamma kernel restricting oscillations to a tighter locus of coupling strengths, and to a limited range of time delays. We finally illustrate the general analytical results with simulations for two particular applications: generation of beta-rhytms in the basal ganglia, and alpha oscillations in the prefrontal-limbic system. 
\end{abstract}

\section{Introduction}

The Wilson-Cowan system~\cite{wilson1972excitatory} is perhaps the best known and most studied mean field model in coupled excitatory and inhibitory neural populations. The pair of nonlinear ordinary differential equations broadly known as the Wilson-Cowan (WC) model provide a beautiful description of the rich behavior that emerges when observing firing patterns of two interacting excitatory and inhibitory neural populations. The variables of the system $E(t)$ and $I(t)$ are the proportion of excitable cells at time $t$. The model encompasses in an efficient, simplified form a variety of factors crucial to these firing patterns: external inputs $P_e(t)$ and $P_i(t)$, coupling strengths $c_i$, $1 \leq i \leq 4$ within and between the excitatory and inhibitory populations, as well as the critical time constant for neural integration $\otau$ and the history of firing in the cells.  The WC nonlinear model was originally conceived as a system of integral equations:
\begin{equation}
\left\{\begin{array}{l}
\ds E(t+\tau_1)=\left(1-\int_{t-r}^tE(s)ds\right)\cdot\mathcal{S}_e\left[\int_{-\infty}^t h(t-s)\left(c_1E(s)-c_2I(s)+P_e(s)\right)ds\right]\\
\ds I(t+\tau_2)=\left(1-\int_{t-r'}^tI(s)ds\right)\cdot\mathcal{S}_i\left[\int_{-\infty}^t h(t-s)\left(c_3E(s)-c_4I(s)+P_i(s)\right)ds\right]
\end{array}\right.
\label{unsimplified}
\end{equation}
The cells which are actively firing at time $t+\otau$ are the cells that receive sufficient input to raise them to above-threshold potential, provided the cells are not in refractory state. The first factor on the right insures that only cells that have transcended the refractory period are considered, where $r$ is the length of this period in msecs. The mean field excitatory / inhibitory inputs received by each population are integrated through the nonlinear, sigmoidal functions $\mathcal{S}_e$ and respectively $\mathcal{S}_i$, to determine the effective drive. To realistically model the biology, is was originally assumed that in each case the sum of inputs is modulated in time by a decreasing kernel $h(t)$, accounting for the memory of firing past firing, which informs on the likelihood for the cells to fire again. The use of a weak Gamma kernel $h(t)$ is briefly suggested in the original paper~\cite{wilson1972excitatory}, before applying a course graining method and simplifying the system, for the convenience of the analysis, to a pair of coupled nonlinear ODEs:
\begin{equation}
\left\{\begin{array}{l}
\otau \dot{\hat{E}}(t)= -\hat{E}(t) + (1-r\hat{E}) \mathcal{S}_e\left(kc_1\hat{E}(s)-kc_2\hat{I}(s)+kP(s)\right)\\
\otau \dot{\hat{I}}(t)= -\hat{I}(t) +(1-r'\hat{I})\mathcal{S}_i\left(k'c_3\hat{E}(s)-k'c_4\hat{I}(s)+k'Q(s)\right)
\end{array} \right.
\end{equation}
where $\ds \int_{t-r}^t E(s) \:ds$ and $\ds \int_{t-r}^t I(s) \:ds$ 
 were approximated to $r\hat{E}(t)$ and $r'\hat{I}(t)$, while $\ds \int_{-\infty}^t h(t-s) E(s) \:ds$ and $\ds \int_{-\infty}^t h(t-s) I(s) \:ds$ were approximated by $k\hat{E}(t)$ and $k' \hat{E}(t)$, respectively. Although the final step of course-graining is common practice in modeling neural function, it can result in the loss of crucial information about temporal integration of synaptic inputs, since it assumes that the temporal input function $h(t)$ is close to one for $t < r, r'$ and decays instantly to zero for $t > r, r'$. Therefore, in some variations of the standard Wilson-Cowan model, the authors found it beneficial to restore this important aspect by examining the equations before the course-graining, typically assuming discrete time delays. To maintain tractability of the resulting integro-differential system, this is typically done in conjunction with assuming null refractory periods $r,r'$, resulting in the system:

\begin{equation}
\left\{\begin{array}{l}
\otau \dot{\hat{E}}(t)= -\hat{E}(t) + \mathcal{S}_e\left(kc_1\hat{E}(s)-kc_2\hat{I}(s)+kP(s)\right)\\
\otau \dot{\hat{I}}(t)= -\hat{I}(t) + \mathcal{S}_i\left(k'c_3\hat{E}(s)-k'c_4\hat{I}(s)+k'Q(s)\right)
\end{array} \right.
\end{equation}

While in our own prior work we also assumed $r,r'=0$, we aimed to obtain more general, kernel-independent result. With these results, we can gain a more comprehensive understanding of the potential of using time delays to model more realistically aspects like non-instantaneous synaptic transmission and time-course of post-synaptic potentials. General results also allowed us to compare dynamic patterns between delay distributions, and to generate hypotheses related to which kernel can be optimally used by a neural population designed to accomplish specific dynamic transitions and functional behaviors.

 In this paper, we extend our work to study dynamics in a system of two coupled Wilson-Cowan systems with distributed delays. This is following in the path of existing studies of dynamics in a pair of  symmetrically or non-symmetrically coupled WC systems. For example work by Borisyuk and others~\cite{borisyuk1995dynamics,ueta2003synchronization,neves2016linear} has explored dynamic patterns in coupled nonlinear systems (without delays), leading to numerical descriptions of rich transitions from stable equilibria, symmetric, anti-symmetric and non-symmetric cycles, to invariant tori and chaos. Dynamics of a coupled pair of WC systems had also been considered by Wang et al~\cite{wang2023possible}, in the case of discrete delays and for a specific, application-targeted circuit.

 In our work, we encompass a more general class of coupling schemes, inspired by realistic neuro-regulatory circuits, but which are also simple enough for formal analyses and efficient simulations. We first generate kernel-independent results on stability and bifurcations in the system. We then illustrate how these transpire differently in two specific cases: the Dirac kernel (used in most existing applications) and the weak Gamma kernel (proposed by Wilson and Cowan in their original reference~\cite{wilson1972excitatory}). 

 Finally, we illustrate our approach on two specific brain circuits. First, we revisit and improve on an existing feedback circuit studied by Wand et al.~\cite{wang2023possible}, modeling the  involvement of the cortex to basal ganglia coupling in the firing behavior observed in Parkinson's Disease~\cite{little2014functional}). Second, we illustrate how our general analytical framework can be used to understand the dynamics of the prefronal-amygdala coupling, and propose potential mechanisms for the gamma rhythms associated with  emotion-regulation~\cite{headley2021gamma}.
 
To begin with, we investigate four simple coupling schemes between the two Wilson-Cowan systems. These are a limited collection when comparing with real brain circuitry, which is much more complex. In the applications described here, we use these schemes to model local loops extracted from ampler and much more complex global connectivity schemes. This is only a first step along the lines of including the network aspect in conjunction with the temporal aspect encapsulate in the distributed delays. It can be viewed as a proof of principle that analytical methods employed in a two-dimensional system could be easily adapted to obtain useful general results for coupling of two such systems, and to compare behaviors between different coupling schemes. Our future work is centered around exploring how to generalize such results in more complex, higher-dimensional networks.

\section{Mathematical model}

We consider two pairs of excitatory / inhibitory Wilson-Cowan populations (denoted $E_1$ / $I_1$ and $E_2$ / $I_2$, respectively). The dynamics of the four coupled units are described by the following system
\begin{equation}\label{sys.nelin}
\ds \overline{\tau}\dot{X_j} = -X_j(t) + {\cal F}_j\left( \int_{-\infty}^t [C_j] X(s) h(t-s) \: ds + P_j \right), \quad 1 \leq j \leq 4
\end{equation}

where the four-dimensional variable is the time-dependent column vector $X(t) = (E_1(t),I_1(t),E_2(t),I_2(t))^T$; $\overline{\tau}$ is the system's time constant (assumed identical in all components for simplicity); $C$ is the $4 \times 4$ matrix of connection weights, with $[C_j]$ designating its $j$th row; $P_j$ is the external input in each component; ${\cal F}_j$ is the integrating sigmoidal function for each variable.

An equilibrium point $\mathcal{E}^\star = (X_1^\star, X_2^\star, X_3^\star, X_4^\star)^T$ of non-linear system \eqref{sys.nelin} has to satisfy the algebraic system:
\begin{equation*}\label{sys.algebraic}
\ds -X_j^\star + {\cal F}_j \left( [C_j] {\cal E}^* +P_j \right) = 0
\end{equation*}

The local behavior of the system \eqref{sys.nelin} around such an equilibirum is captured by its linearization at $\mathcal{E}^*$:

\begin{equation*}\label{sys.lin}
\ds  \dot{X_j} = -\dfrac{1}{\overline{\tau}}X_j(t) + \dfrac{1}{\overline{\tau}} \phi_j\left( \int\limits_{-\infty}^t [C_j]X(s)h(t-s) \: ds \right)
\end{equation*}

where we denote
\begin{equation*}
\ds \phi_j = \phi_j(\mathcal{E}^\star) = {\cal F}'_j\left( [C_j]{\cal E}^* + P_j \right)
\end{equation*}

Looking for solutions of the form $X(t) = e^{zt}X(0)$ or employing the method of Laplace transform, we obtain the following:
\begin{equation}
\ds  z\mathcal{L}_j - X_j(0) = -\dfrac{1}{\overline{\tau}}\mathcal{L}_j + \dfrac{1}{\overline{\tau}} \phi_j\left( H(z)[C_j]{\cal L} \right)
\end{equation}
where ${\cal L}_j(z)$ is the Laplace transform of $X_j(t)$, $H(z)$ is the transform of the kernel $h(t)$, and the column vector ${\cal L} = ({\cal L}_1,{\cal L}_2,{\cal L}_3,{\cal L}_4)^T$.

In matrix form, the linearized transformed system is now equivalent to:
\begin{equation}\label{sys.laplace.matrix}
\left[ \left( z+\frac{1}{\overline{\tau}} \right) I - H(z) C_{\Phi} \right] {\cal L} = {\cal L}(0)
\end{equation}
where $C_{\Phi}$ is obtained from the weight matrix, where each row $j$ was multiplied by $\phi_j$.

\begin{figure}
\centering
\includegraphics[width=0.75\textwidth]{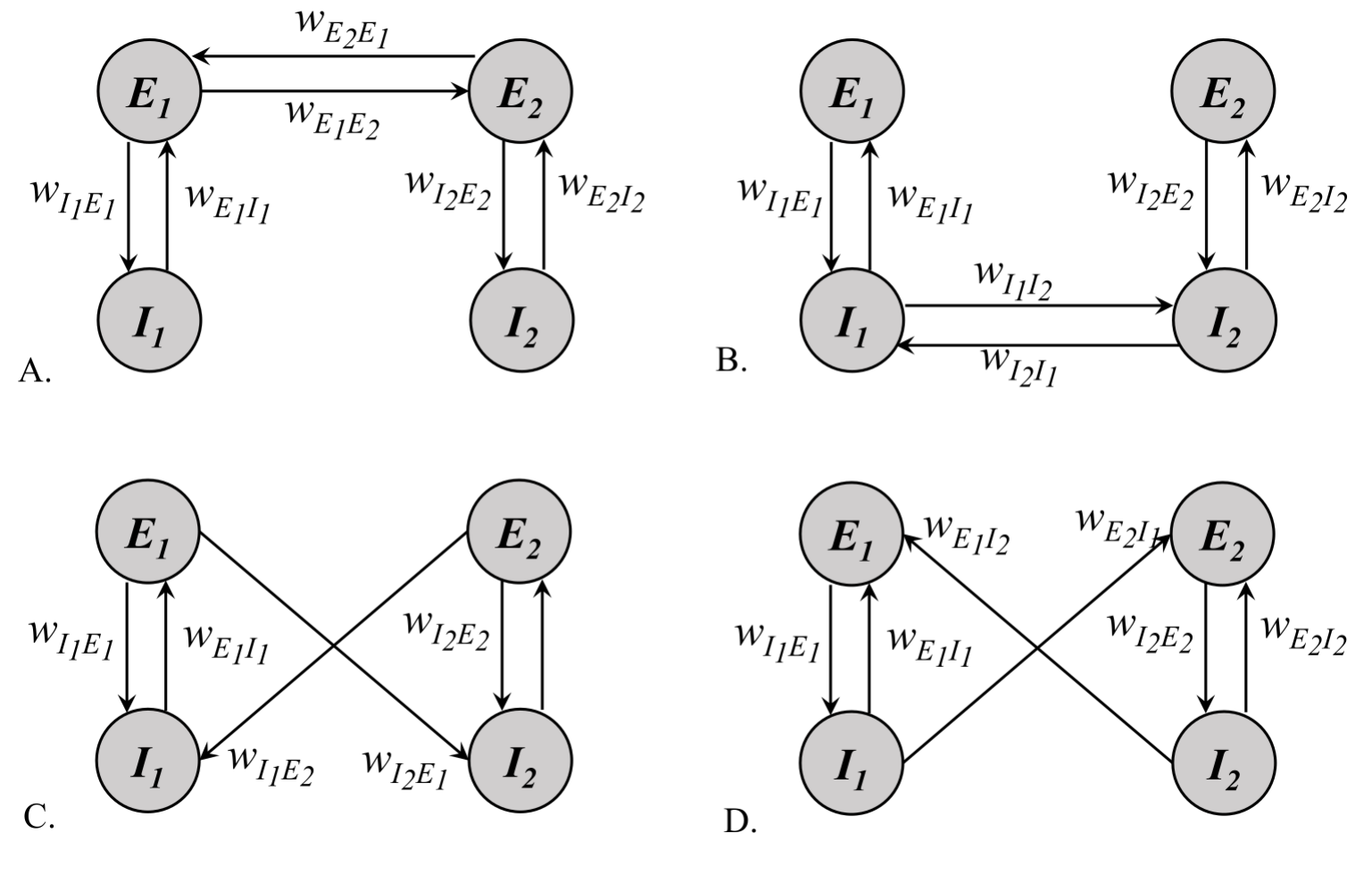}
\caption{\small \emph{{\bf Connectivity schemes between WC modules} studied in this project. {\bf A.} $E_1 \longleftrightarrow E_2$; {\bf B.} $I_1 \longleftrightarrow I_2$; {\bf C.} $E_1 \to I_2$  and $E_2 \to I_1$; {\bf D.} $I_1 \to E_2$  and $I_2 \to E_1$. The corresponding expressions for the coefficients $\alpha$ and $\beta$ are shown in Table~\ref{table_coefficients}.}}
\label{circuits}
\end{figure}

In this paper, we will focus in particular on investigating symmetric adjacency schemes in which each node in either the pair $(E_1,E_2)$ or in the pair $(I_1,I_2)$ is coupled with one node on the opposite side. This generates four coupling possibilities, as illustrated in Figure~\ref{circuits}: $E_1 \longleftrightarrow E_2$; $I_1 \longleftrightarrow I_2$; $E_1 \to I_2$ and $E_2 \to I_1$; $I_1 \to E_2$ and $I_2 \to E_1$. In all four cases, the characteristic equation of the system around the equilibrium ${\cal E}^*$ can be written in the form:
\begin{equation}
 \left(z+\dfrac{1}{\otau}\right)^4 - \alpha \left(z+\dfrac{1}{\otau}\right)^2\left( \dfrac{1}{\otau}\right)^2H^2(z) + \beta \left( \dfrac{1}{\otau} H(z)\right)^4=0 
\label{chareq_general}
\end{equation}
where the expressions of the coefficients $\alpha$ and $\beta$ with respect to the weights and the values of $\phi_j$ depend on the case. We will illustrate how $\alpha$ and $\beta$ are obtained in the first case; the other cases are similar, and we will only provide the final expressions in Table~\ref{table_coefficients}.

In the case $E_1 \longleftrightarrow E_2$, the weight matrix is of the form:
$$C = \left( 
\begin{array}{cccc}
0 & w_{_{E_1I_1}} & w_{_{E_1E_2}} & 0\\
w_{_{I_1E_1}} & 0 & 0 & 0\\
w_{_{E_2E_1}} & 0 & 0 & w_{_{E_2I_2}}\\
0 & 0 & w_{_{I_2E_2}} & 0\\
\end{array}
\right)$$
and the matrix of the linearized transformed system \eqref{sys.laplace.matrix} becomes:
$${\cal J} = \left( 
\begin{array}{cccc}
z + \frac{1}{\otau} & -\frac{1}{\otau} \phi_1 H w_{_{E_1I_1}} & -\frac{1}{\otau} \phi_1 H w_{_{E_1E_2}} & 0\\\\
-\frac{1}{\otau} \phi_2 H w_{_{I_1E_1}} & z + \frac{1}{\otau} & 0 & 0\\\\
-\frac{1}{\otau} \phi_3 H w_{_{E_2E_1}} & 0 & z + \frac{1}{\otau} & -\frac{1}{\otau} \phi_3 H w_{_{E_2I_2}}\\\\
0 & 0 & -\frac{1}{\otau} \phi_4 H w_{_{I_2E_2}} & z + \frac{1}{\otau}\\\\
\end{array}
\right)$$
Expanding the determinant, one easily gets the expressions for $\alpha$ and $\beta$ shown in Table~\ref{table_coefficients}. 

 \renewcommand{\arraystretch}{1.5}
\begin{table}[h!]
\begin{center}
\begin{tabular}{|l|l|}
\hline
Connectivity & Expressions for $\alpha$ and $\beta$\\
\hline 
  $E_1 \longleftrightarrow E_2$   &  $\alpha = \phi_3\phi_4w_{_{E_2I_2}}w_{_{I_2E_2}} + \phi_1\phi_3w_{_{E_1E_2}}w_{_{E_2E_1}} + \phi_1\phi_2w_{_{E_1I_1}}w_{_{I_1E_1}}$ \\
        &  $\beta = \phi_1\phi_2\phi_3\phi_4w_{_{E_1I_1}}w_{_{I_1E_1}}w_{_{E_2I_2}}w_{_{I_2E_2}}$\\
        \hline
%%%%%%%%%%%%%
$I_1 \longleftrightarrow I_2$   &  $\alpha = \phi_3\phi_4w_{_{E_2I_2}}w_{_{I_2E_2}} + \phi_2\phi_4w_{_{I_1I_2}}w_{_{I_2I_1}} + \phi_1\phi_2w_{_{E_1I_1}}w_{_{I_1E_1}}$ \\
        &  $\beta = \phi_1\phi_2\phi_3\phi_4w_{_{E_1I_1}}w_{_{I_1E_1}}w_{_{E_2I_2}}w_{_{I_2E_2}}$\\
        \hline
        %%%%%%%%%%%%%
$E_1 \longrightarrow I_2$   &  $\alpha = \phi_3\phi_4w_{_{E_2I_2}}w_{_{I_2E_2}} + \phi_1\phi_2w_{_{E_1I_1}}w_{_{I_1E_1}}$ \\
 $E_2 \longrightarrow I_1$        &  $\beta = \phi_1\phi_2\phi_3\phi_4w_{_{E_2I_2}}w_{_{E_1I_1}}(w_{_{I_1E_1}}w_{_{I_2E_2}} - w_{_{I_1E_2}}w_{_{I_2E_1}} )$\\
        \hline
                %%%%%%%%%%%%%
$I_1 \longrightarrow E_2$   &  $\alpha = \phi_3\phi_4w_{_{E_2I_2}}w_{_{I_2E_2}} + \phi_1\phi_2w_{_{E_1I_1}}w_{_{I_1E_1}}$ \\
 $I_2 \longrightarrow E_1$        &  $\beta = \phi_1\phi_2\phi_3\phi_4w_{_{I_2E_2}}w_{_{I_1E_1}}(w_{_{E_1I_1}}w_{_{E_2I_2}} - w_{_{E_2I_1}}w_{_{E_1I_2}} )$\\
        \hline
\end{tabular}
\end{center}
\caption{\small \emph{{\bf Dependence of the coefficients $\alpha$ and $\beta$ of the characteristic equation on the coupling strengths} for all four connectivity schemes considered in this paper.}}
\label{table_coefficients}
\end{table}

In general, it may be complicated to determine even the monotonicity of the dependence of $\alpha$ and $\beta$ on the connection weights, since the values of the $\phi_j$ depend themselves on these strengths. However, the signs of $\alpha$ and $\beta$ may be in some cases easy to establish, since ${\cal F}_j$ are increasing, hence $\phi_j$ are positive. This will later allow us to restrict our analysis to specific quadrants in the parameter space $(\alpha,\beta)$, depending on the connectivity scheme. Let's take a look, for example, at the third connectivity scheme. If the signs of the connections are the ones specified in the traditional model, then $w_{_{I_1E_1}},w_{_{I_2E_2}} > 0$, and $w_{_{E_1I_2}},w_{_{E_2I_1}},w_{_{E_1I_1}},w_{_{E_2I_2}} < 0$. This implies that $\alpha<0$ and the sign of $\beta$ is determined by comparing the level of the intra-modular excitation with the level of the inter-modular excitation, such that $\beta < 0$ iff  $w_{_{I_1E_1}}w_{_{I_2E_2}}<w_{_{I_2E_1}}w_{_{I_1E_2}}$.

\section{Stability and bifurcation analysis}
\label{stab_bif_analysis}

\subsection{The no-delay case}

Let's consider first the no-delay case (i.e. $H =1$, for all $z$). In this case, the characteristic equation \eqref{chareq_general} takes the form:
$$\left( z + \frac{1}{\otau} \right)^4 - \left( z + \frac{1}{\otau} \right)^2 \cdot \frac{\alpha}{\otau^2} + \frac{\beta}{\otau^4}=0 \quad \Leftrightarrow \quad (\otau z + 1)^4 - \alpha (\otau z+1)^2 + \beta =0.$$

\noindent Call $x = \otau z$, and notice that $\text{Re} (z) < 0$ when $\text{Re} (x) < 0$. Then the equation becomes, in expanded form:
$$x^4+4x^3+(6-\alpha)x^2 + (4-2\alpha)x +1-\alpha+\beta=0.$$

\noindent Applying the Routh-Hurwitz criterion, we get that the corresponding equilibrium is stable if and only if $6-\alpha >0$, $4-2\alpha>0$, $1-\alpha+\beta>0$, and $4(6-\alpha)(4-2\alpha)>(4-2\alpha)^2+16(1-\alpha+\beta)$. In conclusion, slightly rewriting, we have the following:

\begin{thm}
For the system without delays, the equilibrium is locally asymptotic stability  if and only if $(\alpha,\beta)$ belong to the parameter region 
$${\cal S} = \left\{ \alpha <2 \text{ and } \alpha -1< \beta < \frac{(\alpha-4)^2}{4} \right \}.$$
\end{thm}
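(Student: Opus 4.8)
The plan is to obtain the theorem as a direct algebraic simplification of the Routh--Hurwitz conditions already recorded above, so no genuinely new ideas are required beyond careful bookkeeping. Since $\otau > 0$, the substitution $x = \otau z$ preserves the sign of the real part, so the equilibrium is locally asymptotically stable exactly when every root of
$$p(x) = x^4 + 4x^3 + (6-\alpha)x^2 + (4-2\alpha)x + (1-\alpha+\beta)$$
lies in the open left half-plane. Writing $p(x) = x^4 + a_1 x^3 + a_2 x^2 + a_3 x + a_4$ with $a_1 = 4$, $a_2 = 6-\alpha$, $a_3 = 4-2\alpha$, $a_4 = 1-\alpha+\beta$, I would first reconfirm that the Hurwitz conditions for a quartic amount to $a_1 > 0$, $a_2 > 0$, $a_3 > 0$, $a_4 > 0$ together with the one nontrivial minor $a_1 a_2 a_3 > a_3^2 + a_1^2 a_4$, which is precisely the fourth inequality $4(6-\alpha)(4-2\alpha) > (4-2\alpha)^2 + 16(1-\alpha+\beta)$ stated above.

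First I would dispose of the conditions that reduce immediately: $a_1 = 4 > 0$ holds unconditionally; $a_3 = 4 - 2\alpha > 0$ is equivalent to $\alpha < 2$; $a_4 = 1 - \alpha + \beta > 0$ is equivalent to $\beta > \alpha - 1$; and $a_2 = 6 - \alpha > 0$, i.e. $\alpha < 6$, is automatically implied by $\alpha < 2$ and may be discarded. The only step requiring real computation is the minor inequality: substituting $a_1,\dots,a_4$ and expanding both sides, the $\alpha$-dependent terms reassemble into a perfect square and the inequality collapses to $4(\alpha-4)^2 > 16\beta$, that is, $\beta < (\alpha-4)^2/4$. Collecting the three surviving constraints $\alpha < 2$, $\beta > \alpha - 1$, and $\beta < (\alpha-4)^2/4$ then reproduces exactly the region $\mathcal{S}$.

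The difficulty here is purely arithmetic rather than conceptual: the minor inequality must be expanded without sign slips, and I would want to confirm that the band $\alpha - 1 < \beta < (\alpha-4)^2/4$ is actually nonempty on the admissible range. To settle this I would check that $\alpha - 1 < (\alpha-4)^2/4$ rearranges to $(\alpha-2)(\alpha-10) > 0$, which holds for all $\alpha < 2$; this simultaneously verifies that the two bounds on $\beta$ are compatible and that neither is redundant. Since the Routh--Hurwitz criterion furnishes conditions that are both necessary and sufficient for all roots to have negative real part, the stated equivalence follows at once once the four inequalities have been reduced to the three defining $\mathcal{S}$.
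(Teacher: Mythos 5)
Your proposal is correct and follows essentially the same route as the paper: pass to $x=\otau z$, apply the Routh--Hurwitz conditions to the expanded quartic, and simplify (the paper's fourth inequality indeed collapses to $\beta<(\alpha-4)^2/4$, exactly as you compute). Your explicit verification that the minor reduces to a perfect square and that the band $\alpha-1<\beta<(\alpha-4)^2/4$ is nonempty for $\alpha<2$ fills in the algebra the paper compresses into ``slightly rewriting.''
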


In what follows, we primarily focus our analysis on the parameter region $\cal S$, with the aim of investigating whether asymptotic stability of the equilibrium can be lost by introducing a distributed time delay with a sufficiently large mean value.

\subsection{The case of a general delay distribution}

Returning to the system with distributed delays, we first want to establish whether there is a subset of the region ${\cal S}$ on which local asymptotic stability can still be guaranteed, regardless of the kernel $h(t)$. 

One can easily establish that the characteristic equation \eqref{chareq_general} can be expressed as:
\begin{equation}
F(z) := Q_{\ttau}^2(z)-\alpha Q_{\ttau}(z)+\beta=0
\label{char.eq.Q}
\end{equation}

\noindent where $\ds \ttau = \frac{\tau}{\overline{\tau}}$, and $\ds Q_{\ttau}(z) = \left( \frac{z+\ttau}{\ttau \hat{H}(z)} \right)^2$, with $\hat{H}(z) =H\left(\dfrac{z}{\tau}\right)$. In the following, we will assume that $\hat{H}(z)$ does not depend on $\tau$. For example, for the Dirac kernel, $\hat{H}(z)=e^{-z}$ and for the weak Gamma kernel, $\hat{H}(z)=(1+z)^{-1}$. 

Notice that, since $F(\infty) = \infty$ and $F(0) = 1-\alpha+\beta$, it follows that the equation \eqref{char.eq.Q} has a positive real root when $\beta<\alpha-1$, leading to an unstable equilibrium in this case. In fact, we have the following:

\begin{thm}[Delay-independent instability result]
    If $\beta<\alpha-1$, the equilibrium is unstable, regardless of the delay kernel $h(t)$.
\end{thm}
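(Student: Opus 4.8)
The plan is to exploit the factored form \eqref{char.eq.Q} of the characteristic equation and reduce the instability claim to a single application of the intermediate value theorem on the positive real axis, exactly as foreshadowed by the observations $F(0)=1-\alpha+\beta$ and $F(\infty)=\infty$. The point is that, once $z$ is restricted to the real half-line $[0,\infty)$, the function $F$ is real-valued and continuous, so sign information at the two ends of the interval suffices to locate a positive real root.

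First I would record the properties of $\hat H$ on the nonnegative real axis. Since $h$ is a nonnegative, normalized delay kernel, its Laplace transform $H(s)=\int_0^\infty h(t)e^{-st}\,dt$ is real, strictly positive, continuous and nonincreasing for $s\geq 0$, with $H(0)=\int_0^\infty h(t)\,dt=1$. Consequently $\hat H(z)=H(z/\tau)$ is real, positive, continuous and bounded above by $1$ on $[0,\infty)$, so $Q_{\ttau}(z)=\left(\frac{z+\ttau}{\ttau\hat H(z)}\right)^2$ — and hence $F(z)=Q_{\ttau}^2(z)-\alpha Q_{\ttau}(z)+\beta$ — is well-defined, real-valued and continuous there. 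Evaluating at $z=0$ gives $\hat H(0)=1$, whence $Q_{\ttau}(0)=1$ and $F(0)=1-\alpha+\beta$; under the hypothesis $\beta<\alpha-1$ this is strictly negative.

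Next I would pin down the behavior at $+\infty$. Because $0<\hat H(z)\leq\hat H(0)=1$ for every $z\geq 0$, we have the crude lower bound
\[
Q_{\ttau}(z)=\left(\frac{z+\ttau}{\ttau\hat H(z)}\right)^2\geq\left(\frac{z+\ttau}{\ttau}\right)^2\longrightarrow+\infty \quad\text{as } z\to+\infty,
\]
and since $F$ is a monic quadratic in $Q_{\ttau}$ it is dominated by its leading term, so $F(z)\to+\infty$. Continuity of $F$ on $[0,\infty)$ together with $F(0)<0$ and $F(z)\to+\infty$ then forces, by the intermediate value theorem, a root $z^\star\in(0,\infty)$.

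Finally I would translate this back into instability. As is visible from $\hat H(z)=H(z/\tau)$, the variable $z$ in \eqref{char.eq.Q} is the original characteristic variable of \eqref{chareq_general} rescaled by the positive factor $\tau$, so the sign of $\mathrm{Re}(z)$ is preserved; the real root $z^\star>0$ therefore corresponds to a genuine characteristic exponent with positive real part, and the equilibrium is unstable. I do not expect the intermediate value step to be the difficulty. The care lies in securing the kernel-independent facts $\hat H(0)=1$, together with the positivity and monotonicity of $\hat H$ on $[0,\infty)$, directly from the general hypotheses on $h$ rather than from the explicit Dirac or weak-Gamma forms — this is precisely what guarantees both the well-definedness of $Q_{\ttau}$ and the growth $Q_{\ttau}(z)\to+\infty$ for an arbitrary delay distribution, making the conclusion genuinely delay-kernel-independent.
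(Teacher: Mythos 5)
Your proof is correct and takes essentially the same route as the paper: the paper's (one-line) justification is precisely that $F(0)=1-\alpha+\beta<0$ when $\beta<\alpha-1$ while $F(\infty)=\infty$, forcing a positive real root of the characteristic equation and hence instability. Your extra care — verifying that $\hat H$ is real, positive, continuous and bounded by $1$ on $[0,\infty)$ so that $Q_{\ttau}$ grows and the intermediate value theorem applies, and that the rescaling $z\mapsto z/\tau$ preserves the sign of the real part — simply makes explicit the kernel-independent facts the paper leaves implicit.
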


In fact, the line $\beta=\alpha-1$ is a saddle-node bifurcation curve, as seen in the following:

\begin{thm}[Saddle-node bifurcation result]
    A saddle-node bifurcation occurs for the equilibrium along the curve $\beta = \alpha-1$ if and only if $\alpha \neq 2$, regardless of the kernel $h(t)$. 
    \label{SNcurve}
\end{thm}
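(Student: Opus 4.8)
The plan is to translate the geometric notion of a saddle-node bifurcation into two algebraic conditions on the characteristic function $F(z)=Q_{\ttau}^2(z)-\alpha Q_{\ttau}(z)+\beta$ of equation \eqref{char.eq.Q}: (i) that $z=0$ is a root of $F$ (so the linearization is singular and an equilibrium is about to appear or disappear), and (ii) that this zero root is \emph{simple}, with a real eigenvalue crossing the imaginary axis transversally as a parameter is varied. First I would evaluate $F$ at the origin. Because any admissible delay kernel is normalized so that $\hat{H}(0)=H(0)=\int_0^\infty h(s)\,ds=1$, one gets $Q_{\ttau}(0)=\left(\frac{\ttau}{\ttau\hat{H}(0)}\right)^2=1$, and hence $F(0)=1-\alpha+\beta$. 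Thus $z=0$ is a root of $F$ \emph{exactly} on the line $\beta=\alpha-1$, independently of $h$, which is already consistent with the sign change of $F(0)$ observed just before the delay-independent instability result.

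Next I would test simplicity of this root by differentiating. Writing $F'(z)=Q_{\ttau}'(z)\bigl(2Q_{\ttau}(z)-\alpha\bigr)$ and evaluating at $z=0$ with $Q_{\ttau}(0)=1$ gives $F'(0)=(2-\alpha)\,Q_{\ttau}'(0)$. The crux is then to show $Q_{\ttau}'(0)\neq0$ for every kernel. Setting $g(z)=\frac{z+\ttau}{\ttau\hat{H}(z)}$ so that $Q_{\ttau}=g^2$, a direct computation gives $Q_{\ttau}'(0)=2g(0)g'(0)=\frac{2\,(1-\ttau\,\hat{H}'(0))}{\ttau}$. Since $\hat{H}'(0)=-\left(\int_0^\infty s\,h(s)\,ds\right)/\tau\le 0$ is (minus) the normalized mean delay, the numerator $1-\ttau\hat{H}'(0)$ is strictly positive, so $Q_{\ttau}'(0)>0$ regardless of $h$. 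Therefore $F'(0)=(2-\alpha)Q_{\ttau}'(0)$ vanishes if and only if $\alpha=2$, and the zero root is simple precisely when $\alpha\neq2$.

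Finally I would verify transversality and rule out the degenerate case. Taking $\beta$ as the bifurcation parameter along $\beta=\alpha-1$ with $\alpha\neq2$ fixed, note $\partial F/\partial\beta=1$, so the implicit function theorem applied at $(z,\beta)=(0,\alpha-1)$ yields a smooth simple root $z(\beta)$ with $z(\alpha-1)=0$ and $z'(\alpha-1)=-1/F'(0)=-1/[(2-\alpha)Q_{\ttau}'(0)]$, which is real and nonzero; since the coefficients are real the branch stays real near the bifurcation, so a single real eigenvalue crosses zero transversally --- the defining signature of a saddle-node (fold). Conversely, when $\alpha=2$ (so $\beta=1$ on the curve) the characteristic function factors as $F(z)=(Q_{\ttau}(z)-1)^2$, and because $Q_{\ttau}-1$ has a simple zero at $z=0$ (as $Q_{\ttau}'(0)\neq0$), the origin is a \emph{double} root; the crossing is degenerate and no generic saddle-node occurs, which gives the ``only if'' direction.

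I expect the main obstacle to be the kernel-independence of the two structural facts $Q_{\ttau}(0)=1$ and $Q_{\ttau}'(0)\neq0$: these must be argued from the defining analytic properties of $\hat{H}$ (the normalization $\hat{H}(0)=1$ and the sign of $\hat{H}'(0)$ coming from the positivity of the mean delay) rather than from either of the two explicit examples. A secondary point worth addressing is that the zero-eigenvalue analysis above is a statement about the linearization; to label the bifurcation a genuine saddle-node one should also check that the quadratic coefficient of the reduced (center-manifold) normal form is nonzero, which I would note holds generically given the non-symmetry of the nonlinearities ${\cal F}_j$, but which does not affect the location of the curve itself.
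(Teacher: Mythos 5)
Your proposal is correct and follows essentially the same route as the paper's proof: both hinge on the kernel-independent facts $Q_{\ttau}(0)=1$ and $Q_{\ttau}'(0)>0$ (the paper uses the exact normalization $\hat{H}'(0)=-1$ to get $Q_{\ttau}'(0)=2(1+1/\ttau)$, you use only the sign of $\hat{H}'(0)$), and both obtain the transversality $dz/d\beta=-1/\bigl[Q_{\ttau}'(0)(2Q_{\ttau}(0)-\alpha)\bigr]\neq 0$ precisely when $\alpha\neq 2$ by implicit differentiation of $F(z)=Q_{\ttau}^2(z)-\alpha Q_{\ttau}(z)+\beta$ with respect to $\beta$. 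Your explicit handling of the degenerate case $\alpha=2$ (where $F=(Q_{\ttau}-1)^2$ has a double root at the origin) and your remark that a full saddle-node classification would also require the quadratic normal-form coefficient are welcome additions that the paper leaves implicit.
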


\begin{proof}
We already know that the characteristic equation has a zero root along the line $\beta = \alpha-1$. To insure that a saddle node bifurcation indeed occurs, we yet need to verify that the root $z=z(\beta)$ such that $z(\beta)=0$ when $\beta=\alpha-1$, satisfies the transversality condition 
$\ds \frac{dz}{d\beta} \neq 0$ as $\beta=\alpha-1$. We start with the characteristic equation and differentiate with respect to $\beta$ to get:
$$\frac{dz}{d\beta} \left[ 2Q_{\ttau}(z)Q'_{\ttau}(z) - \alpha Q'_{\ttau}(z) \right] + 1 = 0.$$
Allowing $z(\beta)=0$ as $\beta=\alpha-1$, and using the fact that $\hat{H}(0)=1$ and $\hat{H}'(0) = -1$, we get that $Q_{\ttau}(0)=1$ and $\ds Q'_{\ttau}(0) = 2 \left( 1 + \frac{1}{\ttau} \right)$. We use this to calculate
\begin{equation}
\left. \frac{dz}{d\beta} \right \rvert_{\beta=\alpha-1} = -\frac{1}{Q_{\ttau}'(0)[2Q_{\ttau}(0)-\alpha]} =  \frac{\ttau}{2(\ttau+1)(\alpha-2)}\neq 0,
\end{equation}
and hence, the transversality condition is verified.
\end{proof}

In particular, $\ds \left. \frac{dz}{d\beta} \right \rvert_{\beta=\alpha-1} < 0$ for $\alpha<2$. Hence crossing the line $\beta=\alpha -1$ upwards, one simple positive root of the characteristic equation becomes negative. In the latter part of this section, we will see that 
this is consistent with the exiting asymptotic stability region. With this in mind, a first simple result is presented below, which states sufficient conditions for stability of the equilibrium.

\begin{thm}[A first delay-independent stability result]
The equilibrium is locally asymptotically stable if $\lvert \alpha \rvert + \lvert \beta \rvert <1$, regardless of the kernel $h(t)$.
\label{thm.stability.simple}
\end{thm}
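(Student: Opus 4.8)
The plan is to prove the contrapositive-free statement directly: under $|\alpha|+|\beta|<1$ I will show that the characteristic equation \eqref{char.eq.Q} has no root $z$ with $\mathrm{Re}(z)\geq 0$, which gives local asymptotic stability. The key structural observation is that \eqref{char.eq.Q} is a \emph{quadratic} in the single quantity $Q_{\ttau}(z)$. So the first step is to solve it for $Q_{\ttau}$: letting $Q_\pm = \tfrac{1}{2}\left(\alpha \pm \sqrt{\alpha^2-4\beta}\right)$ denote the two roots of $w^2-\alpha w + \beta = 0$, every root of $F$ must satisfy either $Q_{\ttau}(z) = Q_+$ or $Q_{\ttau}(z) = Q_-$. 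This reduces the problem from a quartic-type transcendental equation to controlling the preimages under $Q_{\ttau}$ of just two complex numbers.

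The next step is a bound on those numbers: I claim $|Q_\pm| < 1$. Indeed, if $w$ satisfies $w^2 = \alpha w - \beta$ and $|w|\geq 1$, then $|w|^2 \leq |\alpha|\,|w| + |\beta| \leq (|\alpha|+|\beta|)\,|w| < |w|$, which forces $|w|<1$, a contradiction. Hence both roots lie strictly inside the unit disk, whether or not $\alpha^2-4\beta$ is positive. The complementary ingredient is a bound on the transform: since $h(t)$ is a non-negative, normalized delay kernel, its Laplace transform obeys $|\hat{H}(z)| \leq \int_0^\infty h(t)\,e^{-\mathrm{Re}(z)t}\,dt \leq 1$ for every $z$ with $\mathrm{Re}(z)\geq 0$. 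This single property is what makes the conclusion genuinely kernel-independent.

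Combining the two ingredients finishes the argument. Recall $Q_{\ttau}(z) = \left(\frac{z+\ttau}{\ttau \hat{H}(z)}\right)^2$, so at any putative root $z$ with $\mathrm{Re}(z)\geq 0$ we would have $\left|\frac{z+\ttau}{\ttau \hat{H}(z)}\right|^2 = |Q_\pm| < 1$, hence $|z+\ttau| < \ttau\,|\hat{H}(z)| \leq \ttau$. On the other hand, $\mathrm{Re}(z)\geq 0$ together with $\ttau>0$ yields $|z+\ttau| \geq \mathrm{Re}(z)+\ttau \geq \ttau$, a contradiction. Therefore no root can lie in the closed right half-plane, and the equilibrium is locally asymptotically stable.

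The computations are elementary; the conceptual content is entirely in the reduction to the two scalar conditions $Q_{\ttau}(z) = Q_\pm$ plus the modulus estimate. The one point deserving care — and the nearest thing to an obstacle — is the kernel bound $|\hat{H}(z)|\leq 1$ on $\mathrm{Re}(z)\geq 0$: it is the only feature of the delay distribution the proof invokes, so I would state the standing hypothesis that $h\geq 0$ with $\int_0^\infty h(t)\,dt = 1$ explicitly, and verify it directly for the two worked examples $\hat{H}(z)=e^{-z}$ and $\hat{H}(z)=(1+z)^{-1}$.
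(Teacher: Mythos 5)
Your proof is correct and is essentially the paper's own argument: both rest on the same two ingredients, namely the kernel bound $\lvert \hat{H}(z)\rvert \le 1$ on $\mathrm{Re}(z)\ge 0$ (forcing $\lvert Q_{\ttau}(z)\rvert \ge 1$ at any putative right-half-plane root) and the observation that the quadratic $w^2-\alpha w+\beta$ with $\lvert\alpha\rvert+\lvert\beta\rvert<1$ cannot have a root of modulus $\ge 1$. The only difference is presentational — you solve the quadratic and bound $\lvert Q_{\pm}\rvert$ first, while the paper writes $Q_{\ttau}(z)=\alpha-\beta/Q_{\ttau}(z)$ and derives the same contradiction $1\le\lvert Q_{\ttau}(z)\rvert\le\lvert\alpha\rvert+\lvert\beta\rvert<1$ directly.
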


\begin{proof}
Suppose that the characteristic equation has a root $z$ such that $\text{Re}(z) \geq 0$. Since $\lvert\hat{H}(z) \rvert \leq 1$, it follows that 
$$\lvert Q_{\ttau}(z) \rvert \geq \frac{\lvert z+\ttau \rvert^2}{\ttau^2}= \frac{\lvert z \rvert^2+2\ttau \text{Re}(z)+\ttau^2}{\ttau^2} \geq 1.$$

But, from the characteristic equation \eqref{char.eq.Q} we have $\ds Q_{\ttau}(z) = \alpha - \frac{\beta}{Q_{\ttau}(z)}$, hence 
$$1 \leq \lvert Q_{\ttau}(z) \rvert \leq \lvert \alpha \rvert + \frac{\lvert \beta \rvert}{\lvert Q_{\ttau}(z) \rvert} \leq \lvert \alpha \rvert + \lvert \beta \rvert, $$
which is in contradiction with the inequality $|\alpha|+|\beta|<1$ from the statement of the Theorem. Hence, the proof is complete.
\end{proof}

%The characteristic equation has a double root at $\alpha=2$, and the behavior of the system at this critical point \textcolor{red}{is described below (Theorem on co-dim 2 bifurcations at $\alpha=2$).}

It is clear from Theorem~\ref{SNcurve} and Theorem \ref{thm.stability.simple}  that, regardless of the form of $h(t)$, the system leaves the stability domain for the equilibrium when a saddle-node bifurcation line {\color{red}$\beta = 1-\alpha$} is crossed. We also know that this is the only saddle-node curve. We want to further characterise the delay-independent and delay-specific stability regions; we will investigate the boundaries of these domains to assess whether one can exit stability via a Hopf bifurcation, and for what values of the mean delay $\tau$.

For the system to cross through a Hopf bifurcation, the characteristic equation needs to have a pair of imaginary conjugate roots. In particular, there needs to exist an $\omega$ such that $z=i \omega$ is a root. In other words
\begin{equation}\label{eq.quadratic.Q}
Q_{\ttau}^2(i \omega) - \alpha Q_{\ttau}(i \omega) + \beta =0
\end{equation}

\noindent for some $\omega>0$. Recall that 
$$Q_{\ttau}(i \omega) =\left( \frac{i \omega + \ttau}{\ttau \hat{H}(i \omega)}\right)^2.$$

\noindent If we write the  Laplace transform in polar form as $\ds \hat{H}(i \omega) = \rho(\omega)e^{-i\theta(\omega)}$, then 
\begin{equation}Q_{\ttau}(i \omega) = \left(  1 + i\frac{\omega}{\ttau} \right)^2 \cdot \frac{1}{\rho^2(\omega)} \cdot e^{2i \theta(\omega)}=\left( 1 + \frac{\omega^2}{\ttau^2} \right) \cdot \frac{1}{\rho^2(\omega)} \cdot e^{\ds 2i \left[ \arctan\left( \frac{\omega}{\ttau}\right) + \theta(\omega) \right]}
\label{Q_polar}
\end{equation}

\noindent We distinguish two cases, based on the sign of the discriminant of the equation \eqref{eq.quadratic.Q} in $Q_{\ttau}$.\\

\noindent {\bf Case 1: $\ds \beta \geq \frac{\alpha^2}{4}$.} Then $Q_{\ttau}(i \omega)$ and $\overline{Q_{\ttau}(i\omega)}$ are complex conjugate roots of the polynomial $P(z) = z^2-\alpha z +\beta$, such that:
\begin{eqnarray}
    \alpha &=& 2 \text{Re} \left( Q_{\ttau}(i \omega)\right)
    \label{alpha}\\
    \beta &=& \lvert Q_{\ttau}(i \omega) \rvert^2
\end{eqnarray}

\noindent It then follows directly from~\eqref{Q_polar} that 
$$\sqrt{\beta} = \lvert Q_{\ttau}(i \omega) \rvert = \left( 1 + \frac{\omega^2}{\ttau^2} \right) \cdot \frac{1}{\rho^2(\omega)} \quad \Leftrightarrow \quad \frac{\omega^2}{\ttau^2} = \sqrt{\beta} \rho^2(\omega) -1.$$

\noindent Hence, a critical delay $\ttau^*(\omega)$ exists only if $\sqrt{\beta} \rho^2(\omega) -1 \geq 0$. Since $0<\rho(\omega) \leq1$ (based on basic properties of the Laplace transform), it follows that $\beta \geq 1$ is a necessary condition in this case for having a Hopf bifurcation. Consequently, the critical delay is given by:
\begin{equation}
    \ttau^*(\omega) = \frac{\omega}{\sqrt{\sqrt{\beta} \rho^2(\omega) -1}}\quad\text{with}\quad \rho(\omega)>\beta^{-1/4}
    \label{ttau_case1}
\end{equation}

\noindent and $\omega$ satisfying condition \eqref{alpha}, which can be rewritten as:
$$\alpha = 2\sqrt{\beta} \cos \left[ 2 \left( \arctan \frac{\omega}{\ttau} + \theta(\omega) \right)  \right]$$
or equivalently
\begin{equation}
    \cos \left[ 2\left( \arctan \sqrt{\sqrt{\beta} \rho^2(\omega)-1} +\theta(\omega)  \right) \right] = \frac{\alpha}{2\sqrt{\beta}}.
    \label{omega_case1}
\end{equation}

%%%%%%%%%%%%%%%%%%
\vspace{2mm}
\noindent {\bf Case 2: $\ds \beta < \frac{\alpha^2}{4}$. } Then the quadratic polynomial $P(z)$ has real roots:
$$    Q_{\ttau}(i \omega) = \frac{\alpha \pm \sqrt{\alpha^2-4\beta}}{2}.$$
Hence,  from~\eqref{Q_polar} we deduce that there exists $k\in\mathbb{Z}^+$ such that
$$2 \left( \arctan \frac{\omega}{\ttau} + \theta(\omega) \right) = k\pi \quad \Leftrightarrow \quad \frac{\omega}{\ttau} = \tan \left( \frac{k \pi}{2} -\theta(\omega) \right) \quad \Leftrightarrow \quad \ttau(\omega) = \omega \cot \left( \frac{k\pi}{2} - \theta(\omega) \right).$$

It also follows directly from~\eqref{Q_polar} that 
$$Q_{\ttau}(i \omega) = \left(1 + \frac{\omega^2}{\ttau^2} \right) \cdot \frac{(-1)^k}{\rho^2(\omega)} = \frac{(-1)^k}{\ds \rho^2(\omega) \cos^2 \left( \frac{k\pi}{2} - \theta(\omega) \right)}  \quad \Rightarrow \quad \lvert Q_{\ttau}(i \omega) \rvert \geq 1.$$

Notice now that the real polynomial $P(x) = x^2-\alpha x +\beta$ has its turning point at $\ds x = \frac{\alpha}{2}$. Since we are in the stability region ${\cal S}$, we are under the condition that $\ds \frac{\alpha}{2}<1$. On the other hand, $P(1) = 1-\alpha+\beta >0$, hence $P$ has no real roots in $(1,\infty)$. Hence a real root $Q_{\ttau}(i \omega)$ with $\lvert Q_{\ttau}(i \omega) \rvert >1$ automatically needs to satisfy $Q_{\ttau}(i \omega) <-1$, making $k$ odd, and thus implying that
\begin{eqnarray}
    \ttau(\omega) &=& \omega \tan \theta(\omega) \quad\text{ with }\quad \tan \theta(\omega) >0 \quad\text{ and }
    \label{ttau_case2}\\
    Q_{\ttau}(i\omega) &=& -\frac{1}{\rho^2(\omega)\sin^2\theta(\omega)}=\frac{\alpha \pm \sqrt{\alpha^2-4\beta}}{2}.
\end{eqnarray}

Based on the sign of $P(-1) = 1+\alpha+\beta$ and on the value of $\alpha$, we distinguish three cases:

\begin{enumerate}[(i)]
\item If $P(-1)>0$ and $\alpha >-2$ (i.e. turning point occurs to the right of $x=-1$), then both roots $r_{1,2}$ of $P$ are in $(-1,1)$. Hence $Q_{\ttau}(i \omega)$ cannot be either of these roots. The system cannot undergo a Hopf bifurcation for any value of $\ttau$, hence the equilibrium remains locally asymptotically stable in the region $\alpha>-2$, $\beta>-\alpha-1$.

\item If $P(-1)>0$ and $\alpha<-2$, then both roots $r_{1,2}$ are in $(-\infty,-1]$, and $\omega$ can be such that $Q_{\ttau}(i \omega)=r_1$ or $Q_{\ttau}(i \omega)=r_2$.

\item If $P(-1)<0$, then $r_1$ is in $(-\infty,-1]$ and $r_2$ is in $(-1,1)$, and $Q_{\ttau}(i \omega)=r_1$ only.
\end{enumerate}

In both cases (ii) and (iii) the first positive root of the equation in $\omega$ (if such a root exists) will occur when $Q_{\ttau}(i \omega)=r_1$, or equivalently:
\begin{equation}
\lvert \rho(\omega)\sin \theta(\omega)  \rvert = \sqrt{-\frac{1}{r_1}} = \sqrt{\frac{2}{|\alpha-\sqrt{\alpha^2-4\beta}|}}\quad\text{with}\quad\tan \theta(\omega)>0.
\label{omega_case2}
\end{equation}

\noindent To insure that a Hopf bifurcation actually does occur with respect to $\ttau$ at the critical point of complex imaginary roots $\pm z(\ttau^*) = \pm i\omega$, we need to verify the transversality condition $\ds \text{Re} \left( \frac{dz}{d\ttau} \right) \neq 0$. For simplicity, we will abbreviate in the following $Q = Q_{\ttau}(z(\ttau))$. We know that $\ds Q^2-\alpha Q + \beta =0$, and $\ds Q = \left( \frac{z+\ttau}{\ttau \hat{H}(z)} \right)^2$.
We differentiate with respect to $\ttau$ to obtain:
$$2Q \left( \frac{\partial Q}{\partial \ttau} + \frac{\partial Q}{\partial z} \cdot \frac{dz}{d \ttau} \right) - \alpha \left( \frac{\partial Q}{\partial \ttau} + \frac{\partial Q}{\partial z} \cdot \frac{dz}{d \ttau} \right) =0$$
Hence
$$\frac{dz}{d \ttau} = -\frac{\ds \frac{\partial Q}{\partial \ttau}}{\ds \frac{\partial Q}{\partial z}} = \frac{z\hat{H}(z)}{\ttau(\hat{H}(z)-(z+\ttau)\hat{H}'(z))} = \frac{z}{\ds \ttau \left( 1-(z+\ttau)\frac{\hat{H}'(z)}{\hat{H}(z)} \right)}$$
Evaluating at the critical point:
$$\left. \frac{dz}{d \ttau} \right \rvert_{\ttau=\ttau^*}= \frac{i \omega}{\ds \ttau^* \left( 1-(i \omega+\ttau^*) \frac{\hat{H}'(i \omega)}{\hat{H}(i \omega)} \right)} $$

Since $\hat{H}(i \omega) = \rho(\omega) e^{-i\theta(\omega)}$, then $\hat{H}'(i \omega) = -[\rho(\omega) \theta'(\omega) + i\rho'(\omega)]e^{-i\theta(\omega)}$ and
$$\left. \frac{dz}{d \ttau} \right \rvert_{\ttau=\ttau^*}= \frac{i \omega}{\ds \ttau^* \left[ 1+(i \omega+\ttau^*)\left( \theta'(\omega) + i \frac{\rho'(\omega)}{\rho(\omega)} \right) \right]}$$

For simplicity of notation, let's call $\ds G(\omega) = 1+(i \omega+\ttau^*)\left( \theta'(\omega) + i \frac{\rho'(\omega)}{\rho(\omega)} \right)$. Then the real part
\begin{equation}
\text{Re}\left( \left. \frac{dz}{d \ttau} \right \rvert_{\ttau=\ttau^*} \right) = \frac{\omega}{\ttau^*} \cdot \frac{\text{Re}(i \overline{G(\omega)})}{\| G(\omega) \|^2} = - \frac{\omega}{\ttau^*} \cdot \frac{\text{Im}(\overline{G(\omega)})}{\| G(\omega) \|^2} = \frac{\omega}{\ttau^*} \cdot \frac{\ds \ttau^* \frac{\rho'(\omega)}{\rho(\omega)} +\omega \theta'(\omega)}{\| G(\omega) \|^2}
\label{transvers}
\end{equation}

The nonzero (transversality) condition is satisfied, marking a Hopf bifurcation at the critical $\ttau^*$, if the factor $\ds \ttau \frac{\rho'(\omega)}{\rho(\omega)} +\omega \theta'(\omega)$ does not change sign, which depends on the form of the kernel. We can distinguish more explicitly what this condition becomes in the two cases already outlined, for which explicit expressions were computed for the critical delay.\\

\noindent {\bf Case 1: $\ds \beta \geq \frac{\alpha^2}{4}$.} Using equation~\eqref{ttau_case1}  that gives us the critical delay in this case, we can calculate
\begin{eqnarray}
\text{Re}\left( \left. \frac{dz}{d \ttau} \right \rvert_{\ttau=\ttau*} \right) &=& \frac{\omega^2}{\ttau^* \| G(\omega) \|^2} \left[ \frac{\rho'(\omega)}{\rho(\omega) \sqrt{\sqrt{\beta}\rho^2(\omega)-1}} + \theta'(\omega) \right] \nonumber \\
&& \nonumber \\
&=& \frac{\omega^2}{\ttau^* \| G(\omega) \|^2} \left[ \arctan \sqrt{\sqrt{\beta}\rho^2(\omega)-1} + \theta(\omega) \right]'
\label{transers_case1}
\end{eqnarray}

\noindent {\bf Case 2: $\ds \beta < \frac{\alpha^2}{4}$.} Since in this case $\ttau^* = \omega \tan \theta(\omega)$, then
\begin{eqnarray}
\text{Re}\left( \left. \frac{dz}{d \ttau} \right \rvert_{\ttau=\ttau*} \right) &=& \frac{1}{\| G(\omega) \|^2} \cdot \frac{1}{\tan \theta(\omega)} \left( \omega \tan\theta(\omega) \cdot \frac{\rho'(\omega)}{\rho(\omega)} + \omega \theta'(\omega) \right) \nonumber \\
&& \nonumber \\
&=& \frac{\omega}{\| G(\omega) \|^2} \big[  \ln ( \lvert \rho(\omega) \sin \theta(\omega) \rvert ) \big]'
\label{transvers_case2}
\end{eqnarray}

While it is difficult to locate / order solutions for~\eqref{omega_case1} and~\eqref{omega_case2} to find whether a critical value of $\ttau^*$ corresponding to a stability-switch Hopf bifurcation does occur and where, we use this discussion above to describe conditions that guarantee local asymptotic stability regardless of the kernel $h(t)$, as shown in Figure~\ref{independent_stability}. We will then also compute, for illustration and comparison, Hopf curves for two particular examples of distributed delays: Dirac and weak Gamma kernels.

\begin{figure}[h!]
\begin{center}
\includegraphics[width=0.5\textwidth]{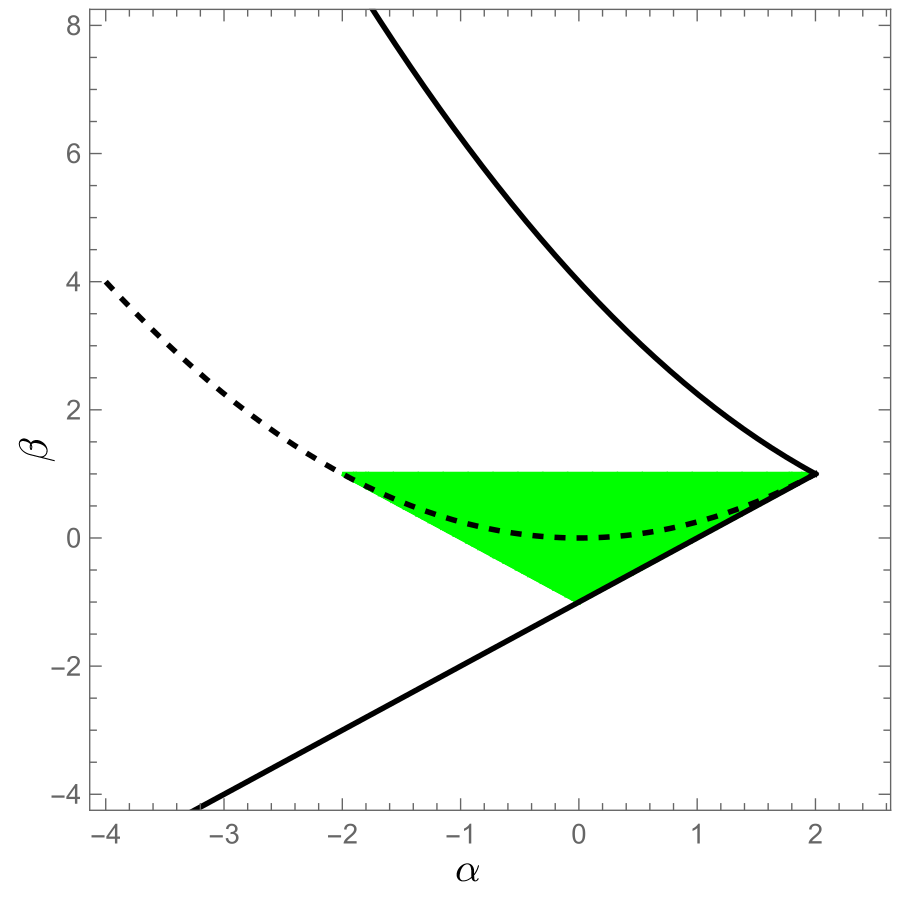}
\end{center}
\caption{\small \emph{{\bf Delay-independent stability region.} The region is shown in green. The curves $\ds \beta = \frac{(\alpha-4)^2}{4}$ and $\beta=\alpha-1$ are in solid black. The Parabola $\ds \beta = \frac{\alpha^2}{4}$ is the dashed black curve.}}
\label{independent_stability}
\end{figure}

\begin{thm}
    Local asymptotic stability of the equilibrium is guaranteed in the region 
    $${\cal R} = \{ \lvert \alpha \rvert -1 < \beta < 1 \} \subset {\cal S},$$
    regardless of the delay kernel $h(t)$.
    \label{stability_theorem}
\end{thm}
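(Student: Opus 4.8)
The plan is to establish stability on $\mathcal{R}$ by a root-counting (continuity) argument, reducing the whole statement to the single fact that the characteristic function $F$ in \eqref{char.eq.Q} has no root on the imaginary axis for $(\alpha,\beta)\in\mathcal{R}$. First I would fix an arbitrary kernel and an arbitrary delay $\ttau>0$ and view $N(\alpha,\beta)$, the number of roots of $F$ with $\mathrm{Re}(z)\ge 0$, as a function on the connected region $\mathcal{R}$. Because $\lvert\hat H(z)\rvert\le 1$ on $\mathrm{Re}(z)\ge 0$ forces $\lvert Q_{\ttau}(z)\rvert\to\infty$, and hence $F(z)\to\infty$, as $\lvert z\rvert\to\infty$ in the closed right half-plane, the nonnegative-real-part roots are finite in number and depend continuously on $(\alpha,\beta)$; thus $N$ can only change when a root crosses the imaginary axis. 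At the reference point $(\alpha,\beta)=(0,0)\in\mathcal{R}$ the equation collapses to $Q_{\ttau}^2(z)=0$, whose only solution is $z=-\ttau<0$, so $N(0,0)=0$ (consistently with Theorem~\ref{thm.stability.simple}). If no root lies on the imaginary axis anywhere in $\mathcal{R}$, then $N\equiv 0$ on $\mathcal{R}$, and since the kernel and delay were arbitrary this is exactly the claimed delay- and kernel-independent stability.

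The core step is therefore excluding imaginary-axis roots on $\mathcal{R}$. For $z=0$ this is immediate, since $F(0)=1-\alpha+\beta>0$ throughout $\mathcal{R}$ because $\beta>\lvert\alpha\rvert-1\ge\alpha-1$. For a nonzero root $z=i\omega$ I would use that $Q_{\ttau}(i\omega)$ must then be a root of the quadratic $P(x)=x^2-\alpha x+\beta$, while the polar identity \eqref{Q_polar} gives $\lvert Q_{\ttau}(i\omega)\rvert=(1+\omega^2/\ttau^2)\,\rho^{-2}(\omega)\ge 1$ since $0<\rho(\omega)\le 1$. Hence it suffices to show that $P$ has no root of modulus $\ge 1$ on $\mathcal{R}$. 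This consolidates the two discriminant cases of the preceding discussion: if $\beta>\alpha^2/4$ the roots of $P$ are complex with modulus $\sqrt{\beta}<1$; if $\beta\le\alpha^2/4$ they are real, and from $P(1)=1-\alpha+\beta>0$ and $P(-1)=1+\alpha+\beta>0$ (both guaranteed by $\beta>\lvert\alpha\rvert-1$), together with the vertex $\alpha/2\in(-1,1)$ (guaranteed by $\lvert\alpha\rvert<2$, which holds on $\mathcal{R}$ since $\lvert\alpha\rvert-1<\beta<1$), the upward parabola keeps both real roots strictly inside $(-1,1)$. Either way $P$ has no root of modulus $\ge 1$, contradicting $\lvert Q_{\ttau}(i\omega)\rvert\ge 1$, so no nonzero imaginary root exists.

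To round off the picture I would verify the inclusion $\mathcal{R}\subset\mathcal{S}$ asserted in the statement: on $\mathcal{R}$ one has $\alpha<2$, $\beta>\alpha-1$, and $\beta<1<(\alpha-4)^2/4$ for every $\alpha<2$, so $\mathcal{R}$ indeed lies in the no-delay stability domain $\mathcal{S}$; this makes the reference-point step consistent with the no-delay theorem and confirms that $\mathcal{R}$ is the part of $\mathcal{S}$ on which stability survives every delay. I would also remark that the modulus computation above is precisely the shared content of Case~1 (a Hopf bifurcation there requires $\beta\ge 1$) and Case~2(i) (there $P(-1)>0$ and $\alpha>-2$ preclude a Hopf bifurcation), so the argument can equivalently be phrased through the two cases already set up in the text.

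The main obstacle I anticipate lies not in the algebra, which is short, but in making the continuity step rigorous for a general kernel: one must guarantee that $N(\alpha,\beta)$ is finite and varies continuously, i.e. that no root can arrive from or escape to infinity as the parameters move. For the Dirac kernel this is the classical theory of retarded functional differential equations, and for the weak Gamma kernel $F$ becomes an honest polynomial after clearing the rational factor $\hat H$, so both concrete cases are safe; in general the bound $\lvert\hat H(z)\rvert\le 1$ on $\mathrm{Re}(z)\ge 0$ confines all nonnegative-real-part roots to a bounded region, and I would spell this confinement out so that the argument-principle reasoning behind the continuity of $N$ is fully justified across all admissible kernels.
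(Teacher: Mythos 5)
Your proof is correct, and it reaches $\mathcal{R}$ by a genuinely different continuation route than the paper. The paper's own proof is a one-line assembly of conditions already derived in the preceding Hopf analysis: working inside $\mathcal{S}$ (stable at zero delay, by Routh--Hurwitz) and increasing the delay, stability can only be lost through a root crossing the imaginary axis, which is excluded because Case~1 requires $\beta\ge 1$, Case~2(i) requires $P(-1)\le 0$ or $\alpha\le -2$, and the zero root lives only on the saddle-node line $\beta=\alpha-1$; the continuation variable is thus the delay $\ttau$, anchored at the no-delay theorem. You instead fix the kernel and the delay and continue in $(\alpha,\beta)$ across the connected (indeed convex) region $\mathcal{R}$, anchored at $(0,0)$, where $F=Q_{\ttau}^2$ has no zeros in the closed right half-plane. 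Your central computation---$\lvert Q_{\ttau}(i\omega)\rvert\ge 1$ from \eqref{Q_polar} while every root of $P(x)=x^2-\alpha x+\beta$ has modulus strictly less than $1$ throughout $\mathcal{R}$---is, as you yourself observe, exactly the shared content of the paper's Case~1 ($\beta\ge1$ necessary) and Case~2(i) ($P(\pm1)>0$ with vertex in $(-1,1)$), but consolidating the two discriminant cases into the single statement ``$P$ has no root of modulus $\ge 1$'' is cleaner and makes transparent why $\lvert\alpha\rvert-1<\beta<1$ is precisely the right region. What your route buys: it is self-contained (it needs neither the no-delay theorem nor Theorem~\ref{SNcurve}), and it makes explicit the root-counting logic that the paper leaves entirely implicit. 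What it costs is the technical step you correctly flag: finiteness and continuity of the root count $N(\alpha,\beta)$ for an arbitrary kernel. To close it, do not work with $F$ itself (which has poles at zeros of $\hat{H}$) but with the cleared form, i.e.\ multiply \eqref{char.eq.Q} by $\ttau^4\hat{H}(z)^4$ to recover the analytic function of \eqref{chareq_general}; this function is analytic in the open right half-plane, continuous up to the imaginary axis, has the same right-half-plane zeros as $F$ (at a zero of $\hat{H}$ it equals $(z+\ttau)^4/\ttau^4\neq 0$), and your a priori bound $\lvert Q_{\ttau}(z)\rvert\le\bigl(\lvert\alpha\rvert+\sqrt{\alpha^2+4\lvert\beta\rvert}\,\bigr)/2<1+\sqrt{2}$ confines all its right-half-plane zeros strictly inside the fixed compact set $\lvert z+\ttau\rvert\le\ttau\sqrt{1+\sqrt{2}}$, so the argument principle on a fixed contour gives the local constancy of $N$. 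Note finally that the deeper functional-analytic assumption---that absence of characteristic roots with nonnegative real part implies local asymptotic stability for a general distributed-delay kernel---is taken for granted by the paper as well, so your proposal is not at a disadvantage on that point.
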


\begin{proof}
    The region ${\cal R}$ is easily obtained from the conditions for ${\cal S}$, together with the additional conditions $\beta<1$ and $\beta > -\alpha-1$ obtained above.
\end{proof}

\subsection{Illustration for the Dirac kernel}

For discrete delays, we have that $h(t) = \delta(t-\tau)$, $\hat{H}(z) = e^{-z}$, and hence $\rho(\omega)=1$ and $\theta(\omega)=\omega$. 

In the case $\ds \beta \geq \frac{\alpha^2}{4}$, equation~\eqref{omega_case1} becomes
$$\cos \left[ 2\left( \arctan \sqrt{\sqrt{\beta}-1} + \omega \right) \right] = \frac{\alpha}{2\sqrt{\beta}} \quad \Rightarrow \quad \omega_k^{\pm} = \pm \frac{1}{2} \arccos \frac{\alpha}{2\sqrt{\beta}} + k\pi - \arctan \sqrt{\sqrt{\beta}-1},~k\in\mathbb{Z}.$$
Therefore, in this case, we have an infinity of critical values for the time delay, provided by \eqref{ttau_case1}:
\[\ttau^*(\omega_k^\pm)=\frac{\omega_k^\pm}{\sqrt{\sqrt{\beta}  -1}}.\]
It is easy to verify that the positive roots $\omega_k^\pm$ are ordered as follows: 
\[0<\omega_0^+<\omega_1^-<\omega_1^+<\omega_2^-<\omega_2^+<\omega_3^-<\ldots\]
providing an increasing sequence of critical time delays. 

The smallest critical frequency is obtained for $k=0$, and the $(+)$ branch:
$$\omega_0^+ = \frac{1}{2} \arccos \frac{\alpha}{2\sqrt{\beta}} - \arctan \sqrt{\sqrt{\beta}-1},$$
hence, the first critical time delay can be computed from \eqref{ttau_case1} which gives:
$$
\ttau^*(\omega_0^+) = \frac{\omega_0^+}{\sqrt{\sqrt{\beta}  -1}}.
$$   

In the case $\ds \beta < \frac{\alpha^2}{4}$, equation~\eqref{omega_case2} becomes
$$\lvert \sin(\omega) \rvert= \sqrt{\frac{2}{\lvert \alpha - \sqrt{\alpha^2-4\beta} \rvert}} \quad \Rightarrow \quad \omega_k^{\pm} = \pm \arcsin \sqrt{\frac{2}{\lvert \alpha - \sqrt{\alpha^2-4\beta} \rvert}}+k\pi,~k\in\mathbb{Z}.$$
It is easy to see that we have the same ordering of the critical frequencies (and the corresponding critical time delays $\ttau^*(\omega_k^\pm)$) as in the previous case. Once again, the smallest critical frequency is 
\[\omega_0^+=\arcsin \sqrt{\frac{2}{\lvert \alpha - \sqrt{\alpha^2-4\beta} \rvert}}\]
and the first critical delay is deduced from~\eqref{ttau_case2}:
$$\ttau^*(\omega_0^+) = \omega_0^+ \tan \omega_0^+ = \arcsin \sqrt{\frac{2}{\lvert \alpha - \sqrt{\alpha^2-4\beta} \rvert}} \cdot \sqrt{\frac{2}{\lvert \alpha-\sqrt{\alpha^2-4\beta}\rvert - 2}}.$$

To show that Hopf bifurcations actually do occur at the corresponding critical delay values
$\ttau^*$, we verify the transversality condition~\eqref{transvers} for $\rho(\omega)=1$, and $\theta(\omega)=\omega$:
$$\text{Re}\left( \left. \frac{dz}{d \ttau} \right \rvert_{\ttau=\ttau^*} \right) 
 = \frac{\omega^2}{\ttau^* \| G(\omega) \|^2} > 0$$

 This tells us not only that a Hopf bifurcation does occur at every critical $\ttau$ where complex imaginary roots occur, but that the local asymptotic stability of the equilibrium is lost at $\ttau^*(\omega)$ corresponding to the smallest positive $\omega$ that satisfies equations~\eqref{omega_case1} or~\eqref{omega_case2}, depending on the case, as illustrated in Figure~\ref{first_crit_delay}a. Then the stability continues to degrade at the subsequent bifurcations, without the possibility of further stability switching.

\begin{figure}[h!]
\begin{center}
\includegraphics[width=\textwidth]{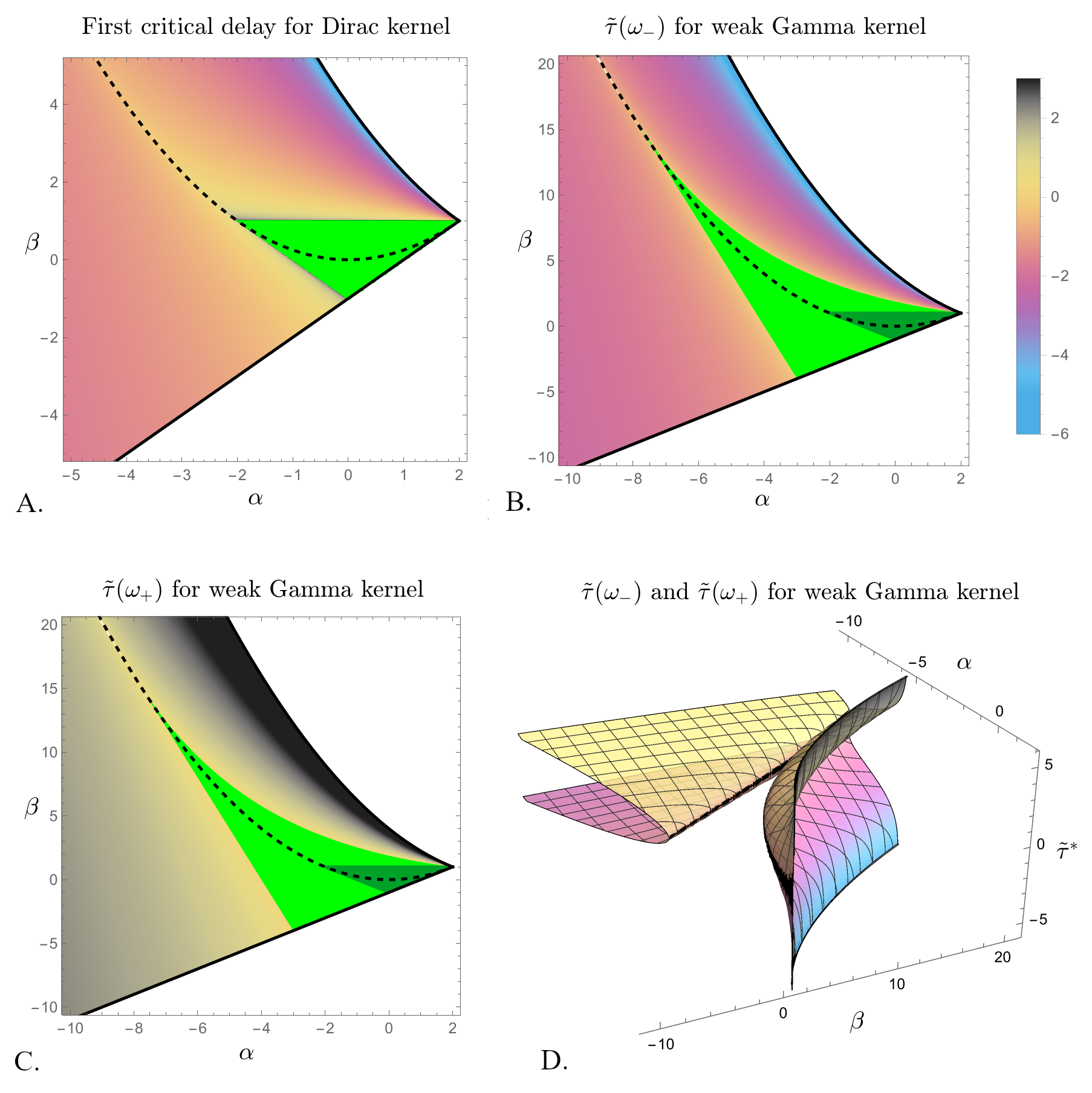}
\end{center}
\caption{\small \emph{{\bf Stability region and critical delays.} {\bf A.} Stability region for the Dirac kernel (shown in green) and first critical delay value, where onset of oscillations occurs at each point $(\alpha,\beta)$. For this kernel, oscillations are persistent (that is, further increasing $\ttau$ will not regaing stability of the equilibrium). The parabola $\ds \beta = \frac{(\alpha-4)^2}{4}$ (boundary of the domain ${\cal S}$) and the line $\beta = \alpha-1$ (saddle-node curve) are shown in solid black. The parabola $\beta = \ds \frac{\alpha^2}{4}$ (that distinguishes between the two computational cases) is shown as a dotted black curve. {\bf B.} Stability region for the weak Gamma kernel, and first critical delay value $\ttau(\omega_{-})$,where oscillations are triggered when increasing $\ttau$. The stability region for the Dirac kernel is also shown in dark green, for comparison. {\bf C.} 
Stability region for the weak Gamma kernel, and second critical delay value $\ttau(\omega_{-})$,where oscillations stop with increasing $\ttau$. {\bf D.} Critical delays for onset and stop of oscillations ($\ttau^*(\omega_{-})$ and $\ttau^*(\omega_{+})$) are shown simultaneously as surfaces with respect to $(\alpha,\beta)$. In all cases, the values of the critical delays are plotted on a logarithm scale, according to the color bar included.  }}
\label{first_crit_delay}
\end{figure}

 \subsection{Illustration for the weak Gamma kernel}

If $h(t)$ is the weak Gamma kernel, we have that $\ds \rho(\omega) = \frac{1}{\sqrt{\omega^2+1}}$, and $\theta(\omega) = \arctan \omega$. 

Reinterpreting Equations~\eqref{omega_case1} and~\eqref{omega_case2} in this case, one obtains that stability is guaranteed in the $(\alpha,\beta)$ region enclosed by (1) the line $\beta = -4\alpha-16$, (2) the parabola $\alpha = 2 \left( 8 - 8 \sqrt[4]{\beta} + \sqrt{\beta} \right)$ and (3) the line $\beta = \alpha -1$.

The computations leading to this result are somewhat technical, and are detailed in Appendix A. The region is shown in green in Figure~\ref{first_crit_delay}b,and it includes the kernel-invariant stability region in Thoerem~\ref{stability_theorem}. 

%%%%%%%%%%%%%%%%%%%%%%%%%%%%%

Outside the green region, the equilibrium will lose stability and the system will enter stable oscillations through a supercritical Hopf bifurcation when increasing the delay past a critical value $\ttau^*(\omega_{-})$. Conversely, the system undergoes a subcritical Hopf bifurcation, transition out of oscillations and regain stability of the equilibrium when the delay further increases past a larger critical value $\ttau^*(\omega_{+})$. The precise expressions for the $\omega_{\mp}$ corresponding to these critical delays are computed in the appendix, separately for parameters above and below the parabola $\ds \beta = \frac{\alpha^2}{4}$.

%%%%%
In the region outside the stability locus and below the parabola $\ds \beta = \frac{\alpha^2}{4}$ (cyan region in Figure~\ref{regions}) one can compute the transversality condition at a critical $\tau^*$, according to~\eqref{transvers_case2}, as
\begin{equation*}
\text{Re}\left( \left. \frac{dz}{d \ttau} \right \rvert_{\ttau=\ttau*} \right)
= \frac{\omega}{\| G(\omega) \|^2} \left(  \ln \frac{\omega^2}{1+\omega^2} \right)' = \frac{1-\omega^2}{\omega (1+\omega^2) \| G(\omega) \|^2} \neq 0
\end{equation*}
Since $\omega_{-}<1$ and $\omega_{+}>1$, as provided by~\eqref{cyan_region} in the Appendix, we have that $\ds \text{Re}\left( \left. \frac{dz}{d \ttau} \right \rvert_{\ttau=\ttau(\omega_{-})} \right)>0$ and  $\ds \text{Re}\left( \left. \frac{dz}{d \ttau} \right \rvert_{\ttau=\ttau(\omega_{+})} \right)<0$. Hence the equilibrium undergoes a supercritical Hopf bifurcation when increasing the delay past $\tau=\tau(\omega_{-})$ and a subcritical bifurcation at $\tau=\tau(\omega_{+})$. \\

In the region outside the stability locus and above the parabola $\ds \beta = \frac{\alpha^2}{4}$, the transversality condition for the weak Gamma kernel can be written as:
\begin{equation*}
\text{Re}\left( \left. \frac{dz}{d \ttau} \right \rvert_{\ttau=\ttau*} \right) = \frac{\omega^2}{\ttau^* \| G(\omega) \|^2} \left[ \arctan \sqrt{\frac{\sqrt{\beta}}{\omega^2+1}-1} + \arctan \omega \right]' = \frac{1}{\omega^2+1} \left[ 1- \frac{w}{\sqrt{\frac{\sqrt{\beta}}{\omega^2+1} -1}} \right] \neq 0
\end{equation*}

It is relatively easy to see that the expression only has one positive root $\omega = u_2 = \sqrt{-1+\sqrt[4]{\beta}}$. It is shown in the Appendix that $\omega_{-}<\sqrt{-1+\sqrt[4]{\beta}}<\omega_{+}$. Moreover, in the subregion where two additional roots $v_{\mp}$ are defined, we also have $\omega_{-}<v_{-}<\sqrt{-1+\sqrt[4]{\beta}}<v_{+}<\omega_{+}$. It follows that, in this case as well, the system can only undergo a pair of supercritical/subcritical Hopf bifurcations at $\ttau^*(\omega_{\mp})$, respectively, as the delay is increased, and that there can be no further stability switching at $\ttau^*(v_{\mp})$.

%%%%%

The dependence on $(\alpha,\beta)$ of the critical delays $\ttau^*(\omega_{\mp})$ is illustrated in Figure~\ref{first_crit_delay}b and c respectively. The three-dimensional plot in Figure~\ref{first_crit_delay}d provides a simultaneous representation of the $\ttau^*(\omega_{\mp})$, allowing for a better visualization of the mechanism by which delay changes can start and cease oscillations at various values of $\alpha$ and $\beta$.

%%%%%%%%%%%%%%%%%%%%%%%%%%%%5

Notice that, while in the case of the Dirac kernel the system cannot switch out of oscillations once these are triggered by further increasing the delay $\tau$, the situation is different for the weak Gamma kernel. This is an important qualitative difference from the Dirac kernel, in which increasing the delay can never lead an oscillating system to regain stability of the equilibrium. In contrast, increasing the delay seems to be able to act when needed as an efficient mechanism for ceasing oscillations in the case of the weak Gamma kernel. One can speculate that this distinction may be used a priori by neural population to decide which kind of delay to use when integrating inputs. The decision may be based on the functional importance allocated to being able to stop oscillations by simply increasing the average delay, without substantially altering the coupling weights.

%%%%%
\section{Applications to brain circuits}

In this section, we illustrate how our coupled Wilson-Cowan model with distributed delays can be used to understand dynamic behavior in brain circuits. Compared to other existing models of the same circuits, our approach adds a crucial timing/memory component to integrating inputs, as encompassed by the distributed delays. Hence our illustration and discussion will focus around this aspect, and how it interacts with other factors that contribute to the firing rate dynamics (such as coupling weights and signs, or connectivity geometry).

As previously mentioned, realistic brain networks are much larger, and feature complex structure and interactions that far transcend the model setup. However, in neural dynamic modeling it is often useful to consider dramatic simplifications of these networks, in order to study local, targeted questions, and to generate hypotheses that can be further tested in more realistic setups. Below, we consider two examples that have been studied before in the literature, in which the basic functional structure of the brain circuit is of one of the types considered in Figure~\ref{circuits}. Both examples aim to understand cortical feedback control in regulatory neural circuits, and its significance to healthy and pathological function. The first model considers interactions between the Subtalamic Nucleus (STN) and Globus Pallidus pars Externa (GPe), as well as their cortical regulation (as described in~\cite{wang2023possible}), and discusses the contribution to these mechanisms to generating the Beta-oscillations observed in Parkinson's Disease. The second model considers the feedback loop comprised of the prefrontal cortex and the basolatral amygdala, and the potential mechanisms behind the gamma oscillations that are believed to be critial in emotion regulation~\cite{headley2021gamma}.

\subsection{Modeling feedback projections from cortex on basal ganglia}

In~\cite{wang2023possible}, the authors use a cortex-basal ganglia resonance network to study the mechanism generating the Beta oscillations found in abnormal function, such as that observed in Parkinson's Disease. The reduced network (including projections between STN and GPe, together with regulatory connections from two cortical regions and feedback projections) takes the form in Figure~\ref{circuits}a. The circuit is schematically represented in Figure~\ref{Wang_circuit} with the specific names for variables, parameters and inputs used in the reference. The input integration sigmoidal function is specified as
$$\ds {\cal F}_j(u) = \frac{M_j}{\ds 1 + \left( \frac{M_j}{B_j} - 1 \right) \exp(-4u/M_j)}$$
A complete translation of nomenclature and parameter values between the reference and those used in our paper is included in Table~\ref{Table_Wang}.

The original model incorporates discrete delays, and identifies, for fixed values of the coupling parameters, a critical delay value $T^*$; if the delay is increased past $T=T^*$, a supercritical Hopf bifurcation occurs, with loss of stability for the equilibrium, and onset of stable oscillations. The authors then use this information to study in detail the effects of perturbing the inhibitory coupling strengths $W_{SC}$ (cortical projection to STN) and $W_{GS}$ (feedback projection from GPi to the cortex) away from their baseline values, when the system is operating in the proximity of the critical delay $T^*$ computed for the baseline values.

In our work, we will use the theoretical results in Section~\ref{stab_bif_analysis} to extend the analysis of the model to a more comprehensive discussion around the effects of distributed delays. After rephrasing some of the original results in light of our theoretical setup for the Dirac kernel, we show how these results differ when the delays are distributed according to a weak Gamma kernel. In addition, while in our theoretical analysis it was convenient to classify dynamic behaviors by navigating the $(\alpha,\beta)$ plane, for biological interpretations it is optimal to rephrase our results directly in terms of coupling weights.

\begin{figure}[h!]
\begin{center}
\includegraphics[width=0.7\textwidth]{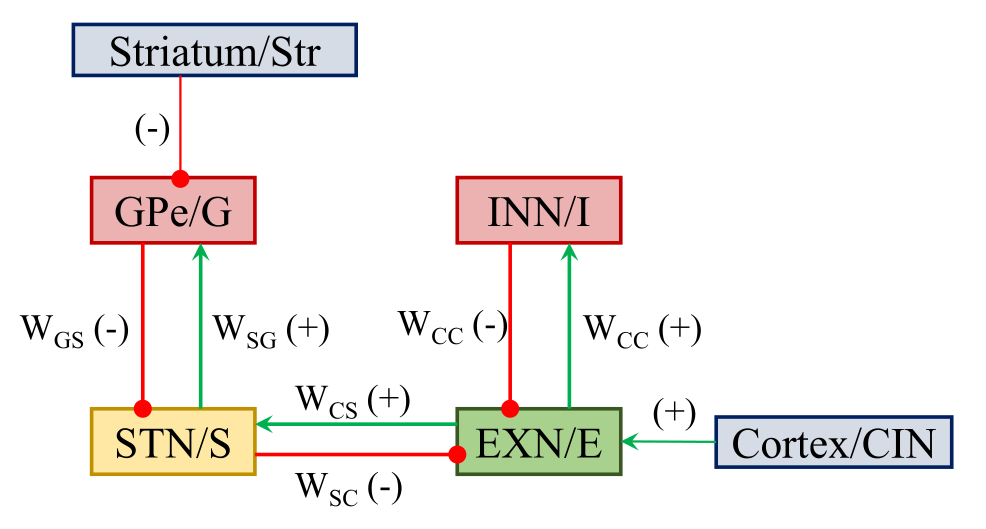}
\end{center}
\caption{\small \emph{{\bf Cortex-basal ganglia circuit,} as described in the reference~\cite{wang2023possible}. The system variables are s follows: $G(t)$ for the Globus Pallidus pars Externa (GPe); $S(t)$ for the Subtalamic Nucelus (STN); $E(t)$ for the excitatry cortical nucleus (EXN); $I(t)$ for the inhibitory cortical nucleus (INN). Constant external inputs are received from the Striatum (Str) and from other areas in the Cortex (CIN). The excitatory/inhibitory projections are designated with green and respectively red arrows. The connection weights and their sign are specified, where relevant. The values used for the connectivity strengths are provided in Table~\ref{Table_Wang}.}}
\label{Wang_circuit}
\end{figure}

\begin{table}[h!]
\centering
\begin{tabular}{|c|c|c|c|}
       \hline
\multicolumn{4}{|l|}{\bf Variables, functions and connectivities}\\  
\hline
Name & Relation reference & Value & Units \\
\hline
$E_{1}$,$I_1$,$E_2$,$I_2$  & $S,G,E,I$ & &\\
${\cal F}_j$  & $AF_j$ & &\\
$w_{_{E_{1}I_{1}}}$  & -$W_{GS}$ & -4.87 &\\
$w_{_{I_{1}E_{1}}}$  & $W_{SG}$ & 2.56 &\\
$w_{_{E_{1}E_{2}}}$  & $W_{CS}$ & 6.60 &\\
$w_{_{E_{2}E_{1}}}$  & -$W_{SC}$ & -2.58 & \\
$w_{_{E_{2}I_{2}}}$  & -$W_{CC}$ & -1.56 & \\
$w_{_{I_{2}E_{2}}}$  & $W_{CC}$ & 1.56 & \\
\hline
\hline
\multicolumn{4}{|l|}{\bf External inputs}\\
\hline
Name & Relation reference & Value & Units\\
\hline
$P_2$ & Str & 40.51 & spk/s\\
$P_3$ & CIN & 172.18 & spk/s\\
\hline
\end{tabular}
\begin{tabular}{|c|c|c|c|}
\hline
\multicolumn{4}{|l|}{\bf Sigmoidal parameters}\\    
\hline
Name & Relation reference & Value & Units \\
\hline
$M_1$  & $M_S$ & 300 & spk/s \\
$M_2$  & $M_G$ & 400 & spk/s \\
$M_3$  & $M_E$ & 71.77 & spk/s \\
$M_4$  & $M_I$ & 277.39 & spk/s \\
$B_1$  & $B_S$ & 17 & spk/s \\
$B_2$  & $B_G$ & 75 & spk/s \\
$B_3$  & $B_E$ & 3.62 & spk/s \\
$B_4$  & $B_I$ & 9.87 & spk/s \\
\hline
\hline
\multicolumn{4}{|l|}{\bf Time constants and delays}\\
\hline
Name & Relation reference & Value & Units\\
\hline
$\otau$ & $\tau_{_{G,S,E,I}}$ & 15 & ms\\
$\ttau$ & $T/\otau$ & &\\
\hline
\end{tabular}
\caption{\small \emph{{\bf Translation of variables, functions and parameters} from the reference model~\cite{wang2023possible} to the conventions used in our general analysis.}}
\label{Table_Wang}
\end{table}

We first use our framework to contextualize the effects of changing the strengths of the inhibitory coupling from the GPe to the STN, and of the feedback inhibition from the STN to the EXN, as described in the reference. For example, in the original analysis, $W_{SC}$ was varied when the value of $W_{GS}$ was first fixed to $W_{GS}=4.87$ and then to $W_{GS}=1.33$ (while all other parameters were fixed to the table values). The discrete delay was taken to be $T = 3.9492$ ms (which is the critical delay computed for the baseline values $W_{SC}=2.58$ and $W_{GS}=4.7$). In Figure~\ref{increase_WSC}a, we illustrate the two corresponding paths of increasing the magnitude of $W_{SC}$ (for the two fixed values of $W_{GS}$, respectively) in our $(\alpha,\beta)$ plane. To match and interpret the original simulations, we also show in the same figure panel the stability region for the Dirac kernel corresponding to $T = 3.9492$ (i.e., $\ttau=T/\otau=0.0026$). The delay-independent stability region for the Dirac kernel is also illustrated in bright green, for comparison. 

For the GPe projection weight fixed to $W_{GS}=1.33$, the parameter curve $(\alpha(W_{SC}),\beta(W_{SC}))$ starts at $W_{SC}=0$ within the stability region for the corresponding delay $T = 3.9492$ (light green region). As the feedback STN inhibition to the EXN is introduced and $W_{SC}$ increases from zero, the equilibrium quickly loses stability through a supercritical Hopf bifurcation, and the system transitions  into oscillations (blue region). These oscillations persist as the feedback inhibition is increased to $W_{SC} \sim 15$, where the curve crosses back through a supercritical Hopf bifurcation into the region of stability for the equilibrium, efficiently ceasing  oscillations. This bidirectional Hopf window is represented and discussed in Figure 5 of the reference. In the same figure, it is shown that the situation differs when fixing $W_{GS}=4.87$ and increasing $W_{SC}$, with oscillations that set in at $W_{SC}\sim 2.5$, and persist to the end of the interval. This suggests the possibility that, when the feedback modulation from the STN to the EXN is extremely low, increasing it acts as a trigger for oscillations. Furthermore, continuing to increase this feedback may act as a mechanism to stop oscillations, as long as the STN receives a strong enough inhibitory projection from the GPe. Illustrating this mechanism in the $(\alpha,\beta)$ plane allows us to put the result in perspective of our theoretical analysis, and further generalize it below.

\begin{figure}[h!]
\begin{center}
\includegraphics[width=\textwidth]{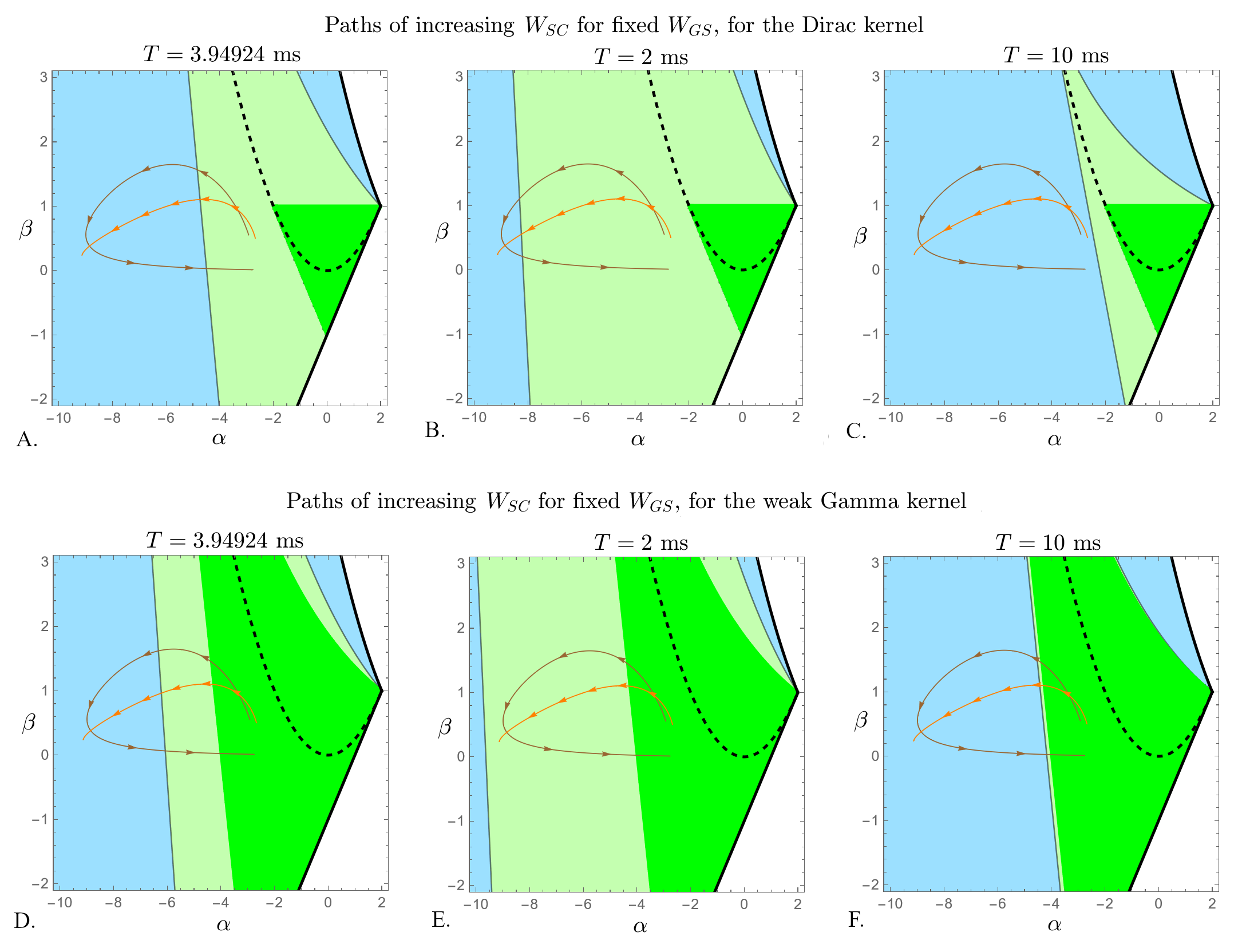}
\end{center}
\caption{\small \emph{{\bf Evolution of the stability of the equilibrium as $W_{SC}$ is increased from 0 to 20, for fixed $W_{GS}$.} The paths of increasing $W_{SC}$ are represented in the plane $(\alpha,\beta)$ as a brown curve (for $W_{GS}=1.33$) and as an orange curve (for $W_{GS}=4.87$). The top panels show these curves overlaid on the stability regions for the Dirac kernel, for three values of the discrete delay: {\bf A.} $T=3.94924$ ms (the value in the reference); {\bf B.} $T=2$ ms; {\bf C.} $T=10$ ms. The bottom panels show the same curves overlaid on the stability regions for the weak Gamma kernel, for the same three values of the distributed delay: {\bf D.} $T=3.94924$ ms (the value in the reference); {\bf E.} $T=2$ ms; {\bf F.} $T=10$ ms. In each case, the delay-independent stability region for the equilibrium is shown in bright green; the larger, delay-specific stability region is shown in light green. All other system parameters were fixed to their baseline values in Table~\ref{Table_Wang}.}}
\label{increase_WSC}
\end{figure}

Indeed, a natural follow-up question to consider is whether this mechanism for onset/stop oscillations is preserved for other values of the discrete delay $T$. To track how the results depend on the delay, we illustrate the corresponding transitions for a shorter delay $T=2$ (Figure~\ref{increase_WSC}b), and for a longer delay $T=10$ (Figure~\ref{increase_WSC}). These produce very different scenarios, with the shorter delay causing the system to remain mostly in the stability region along both paths of increasing $w_{SC}$, while in the case of longer delay, both paths are confound to the blue, oscillatory region. These observations suggest that if the cells in the circuit operate with discrete delays when integrating inputs, longer delays will promote oscillations.

One needs to recall, however, that the use of the Dirac kernel is only a convenient modeling choice, and that in reality neural populations may use more sophisticated, distributed delays when integrating inputs. In fact, when the Wilson-Cowan model was first introduced, the authors had suggested a weak Gamma kernel as an appropriate choice for the delay distribution. For our next step, we use the theoretical results in Section~\ref{stab_bif_analysis} to show how the same changes to the coupling strengths described previously for the Dirac kernel produce different dynamic effects in the case of the weak Gamma kernel. The bottom panels (d,e and f) of Figure~\ref{increase_WSC} illustrate the same paths as the top three panels, for the same values of the average delay $T$, except that this delay is now distributed according to the weak Gamma kernel. The most notable difference is that, compared to the Dirac kernel, the weak Gamma distribution tends to confine the dynamics to the stability regime. In fact, for short enough distributed delays, the system never enters oscillations (in panel e, corresponding to $T=2$, the two paths never leave the weak Gamma stability region). Increasing the distributed delay up to $T=10$ (panel f) leads to recovering a weak Gamma stability region comparable to that for $T=3.94924$ in the case of the Dirac kernel (panel a). One may further speculate that a failure of the neural populations to use an adequate delay distribution when processing inputs (e.g., using a Dirac instead of weak Gamma distribution) may be a contributing culprit to generating excessive oscillations.

%%%

\begin{figure}[h!]
\begin{center}
\includegraphics[width=\textwidth]{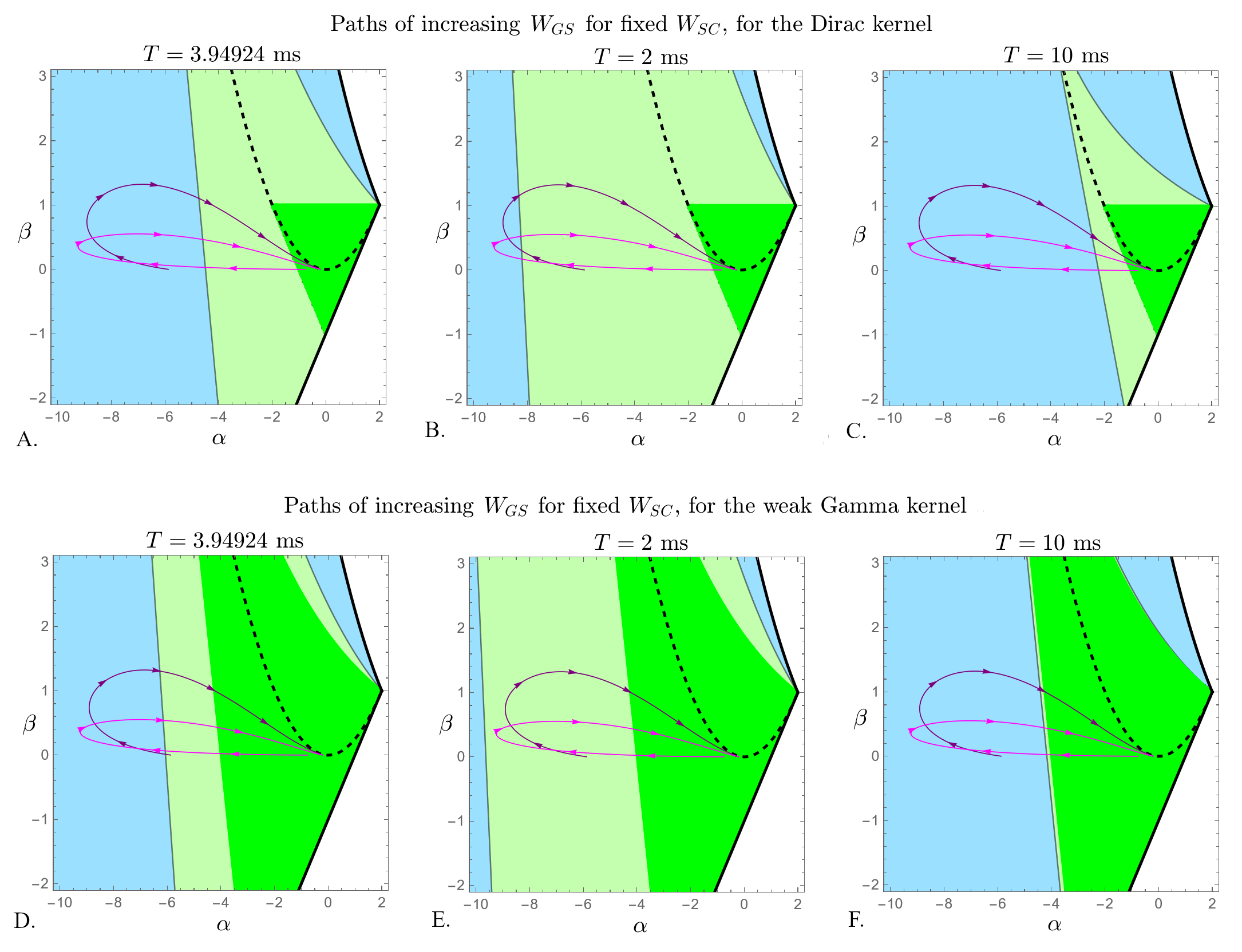}
\end{center}
\caption{\small \emph{{\bf Evolution of the stability of the equilibrium as $W_{GS}$ is increased, for fixed $W_{SC}$.} The paths of increasing $W_{GS}$ are represented in the plane $(\alpha,\beta)$ as a purple curve (for $W_{SC}=2.58$) and as a pink curve (for $W_{SC}=10$). The top panels show these curves overlaid on the stability regions for the Dirac kernel, for three values of the discrete delay: {\bf A.} $T=3.94924$ ms (the value in the reference); {\bf B.} $T=2$ ms; {\bf C.} $T=10$ ms. The bottom panels show the same curves overlaid on the stability regions for the weak Gamma kernel, for the same three values of the distributed delay: {\bf D.} $T=3.94924$ ms (the value in the reference); {\bf E.} $T=2$ ms; {\bf F.} $T=10$ ms. In each case, the delay-independent stability region for the equilibrium is shown in bright green; the larger, delay-specific stability region is shown in light green. All other system parameters were fixed to their baseline values in Table~\ref{Table_Wang}.}}
\label{increase_WGS}
\end{figure}

Another parameter experiment considered in the original reference is that of increasing $W_{GS}$, while keeping $W_{SC}$ fixed. Figure~\ref{increase_WGS} starts by replicating the behavior reported by the Figure 7 of the reference for the Dirac kernel. In our figure panel, we trace in the $(\alpha,\beta)$ plane the curves corresponding to fixing $W_{SC}=2.58$ (in purple) and $W_{SC}=10$ (in pink) as $W_{GS}$ is being increased. We overlay these curves on the level curve for the delay value used in the reference ($T=3.94924$). We observe how, in the case of lower $W_{SC}$, the systems transitions from a stable equilibrium (light green region) to oscillations (blue region); for the higher values of $W_{SC}$, the system also transitions back into the stability region if $W_{GS}$ is increased enough. In both cases, oscillations are possible only within a specific interval for $W_{GS}$, and perturbations outside of these limits would promptly cease the cycling. In panels b and c of the same figure, we show how this picture changes when the discrete delay is changed to $T=2$, then to $T=10$. As one may expect based on Figure~\ref{first_crit_delay}, the curves spend more time in the larger stability region, reducing the interval for $W_{GS}$ where  oscillations are promoted. Conversely, the system with a longer discrete delay will spend the whole time in the blue region, suggesting that a slower input integration renders the system more prone to oscillating. A similar association of a larger oscillatory interval for $W_{GS}$ when $T$ increases is associated with the weak Gamma kernel. However, just as before, weak Gamma distributed delays are altogether more efficient at keeping the system within the stability region than discrete delays.  

From both experiments illustrated in Figures~\ref{increase_WSC} and ~\ref{increase_WGS}, one may further speculate that mechanisms that lead to abnormal oscillations in the system may go beyond simple perturbations in the coupling parameters. Operating with an inadequate kernel, of with abnormally long delays in processing inputs may both represent root problems for the excessive oscillations in the GPe/STN system, as observed in pathologies like Parkinson's Disease. To better illustrate the direct interplay between all these factors in determining the ultimate behavior of the system, we show in Figure~\ref{WGS_WSC_plane} the map of the critical delays in the parameter plane $(W_{GS},W_{SC})$, for both Dirac and weak Gamma distributed delays. This delivers, for this particular application, a useful translation of the theoretical representation in terms of $(\alpha,\beta)$ from Figure~\ref{first_crit_delay}.

One can now more easily navigate the Dirac $(W_{GS},W_{SC})$ parameter plane in Figure~\ref{WGS_WSC_plane}a. It is noticeable that the level sets of $T^*$ are so that, when fixing $W_{SC}$ and increasing $W_{GS}$ far enough from zero, the system will almost always cross the $T^*$ threshold twice (once with onset and once with cessation of oscillations). On the other hand, fixing $W_{GS}$ and increasing $W_{SC}$ only appears to have a stop-oscillation built in mechanism for small values of $W_{GS}$ (such as $W_{GS}=1.33$, illustrated in Figure~\ref{increase_WSC}). For larger values of $W_{GS}$ (e.g., $W_{GS}=3.87$), the oscillations will persist no matter how high the STN-EXN inhibition. 

This remains true for the weak Gamma case, illustrated in Figure~\ref{WGS_WSC_plane}b. However, in the case of weak Gamma distributed delays, the critical delay profile is significantly higher than that obtained for the Dirac kernel (with a larger stability region, and colors corresponding to larger $T^*$ in the region where oscillations are possible). More precisely, for any given parameter pair $(W_{GS},W_{SC})$, the $T^*$ value where oscillations are triggered for the weak Gamma kernel is always higher than the Dirac critical value, implying that a discrete delay scheme will promote oscillations over weak Gamma distributed delays, for the same average delay value. This supports the observations for specific paths of increasing $W_{SC}$ and $W_{GS}$ from Figures~\ref{increase_WGS} and~\ref{increase_WSC}. 

\begin{figure}[h!]
\begin{center}
\includegraphics[width=0.95\textwidth]{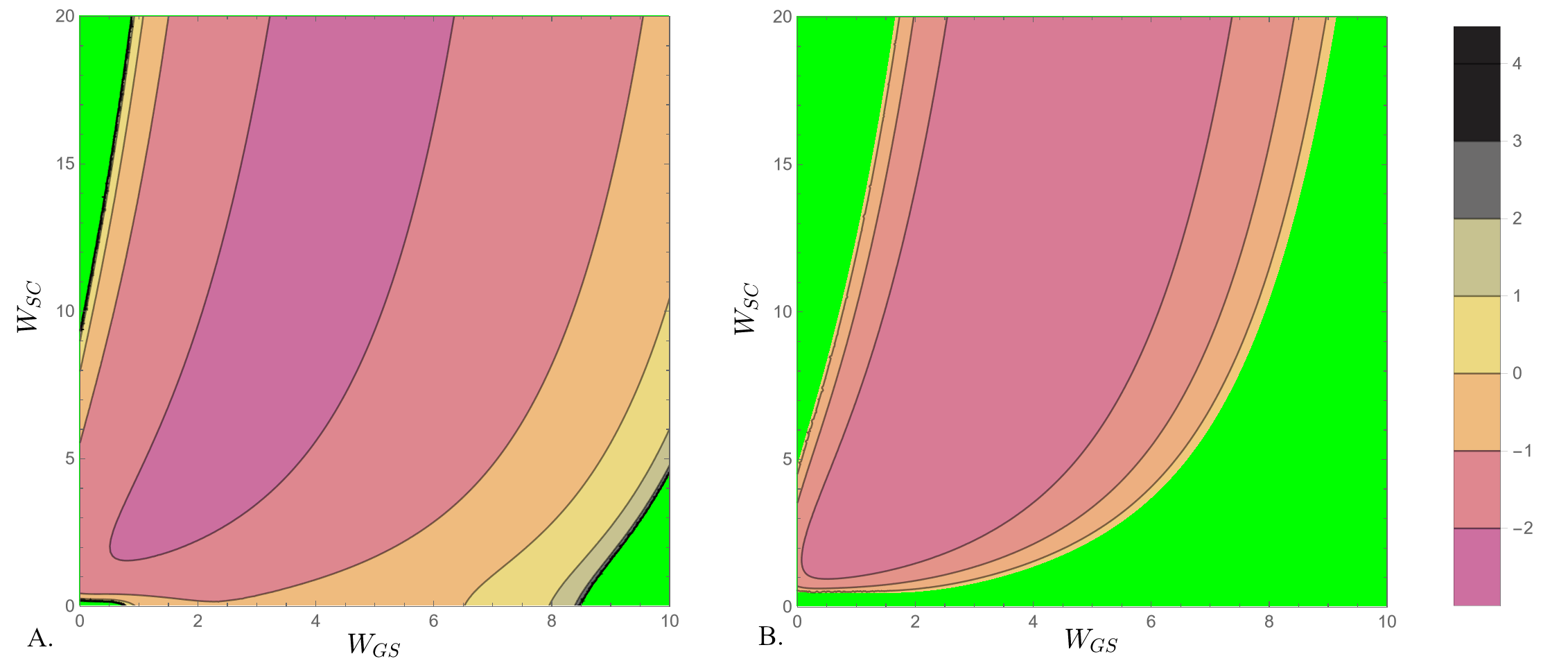}
\end{center}
\caption{\small \emph{{\bf Critical delay profiles in the $(W_{GS},W_{SC})$ parameter plane.} The panels show, for the Dirac ({\bf A})  and the weak Gamma kernel ({\bf B}) the logarithm of the critical delay $T^*$ at which transition into oscillations occurs when increasing $T$. The color coding, specified in the bar, is the same as in Figure~\ref{first_crit_delay}. As before, the logarithm was used to increase visibility. As in Figure~\ref{first_crit_delay}, the green region represents the stability region for the corresponding kernel. Outside of this region, a representation based on contour-curve partitioning was preferred in this case, to facilitate the comparison between kernels.}}
\label{WGS_WSC_plane}
\end{figure}

To fix our ideas, we further illustrate the comparison between kernels with simulations for specific parameter values and increasing delay $T$. Figure~\ref{simulation_Dirac} shows the GPe-STN projection of the solution with initial condition $(E_1,I_1,E_2,I_2) = (0,0,0,0)$ under three different scenarios dictated by the average delay $T$: before, at and after the supercritical Hopf bifurcation at $T^* = 3.94924$. This illustrates the formation of a stable cycle and the subsequent oscillations in basal ganglia firing rates. Figure~\ref{simulation_weak} shows the corresponding trajectories in the same two-dimensional GPe-STN slice, in the case of weak Gamma distributed delays, as $T$ is increased to pushed the system through a supercritical bifurcation at $T^*_{-}$ 
and then through a subcritical Hopf bifurcation at $T^*_{+}$. 
That is, as $T$ is increased, the system will first enter and then leave the oscillatory regime. It is important to recall that this oscillation \emph{window} did not appear in the Dirac kernel, where we found oscillations to persist under any delay increase once they are triggered. In contrast, for the weak Gamma kernel, increasing the delay $T$ past the upper threshold $T^*_{+}$ acts as a stop-oscillation mechanism. We note that this mechanism is relevant only if $T^*_{+}$ is within plausible biological values. As Figure~\ref{first_crit_delay}c shows, the position and width of the oscillation window depend on the coupling parameters; more precisely, $T^*_{-}$ increases and $T^*_{+}$ decreases close to the boundary of the stability region. Operating with coupling strengths that place the system close to this boundary may bring the stop-oscillation $T^*_{+}$ within a biological range, and would concomitantly increase the oscillation-triggering delay $T^*_{-}$. 

\begin{figure}[h!]
\begin{center}
\includegraphics[width=0.8\textwidth]{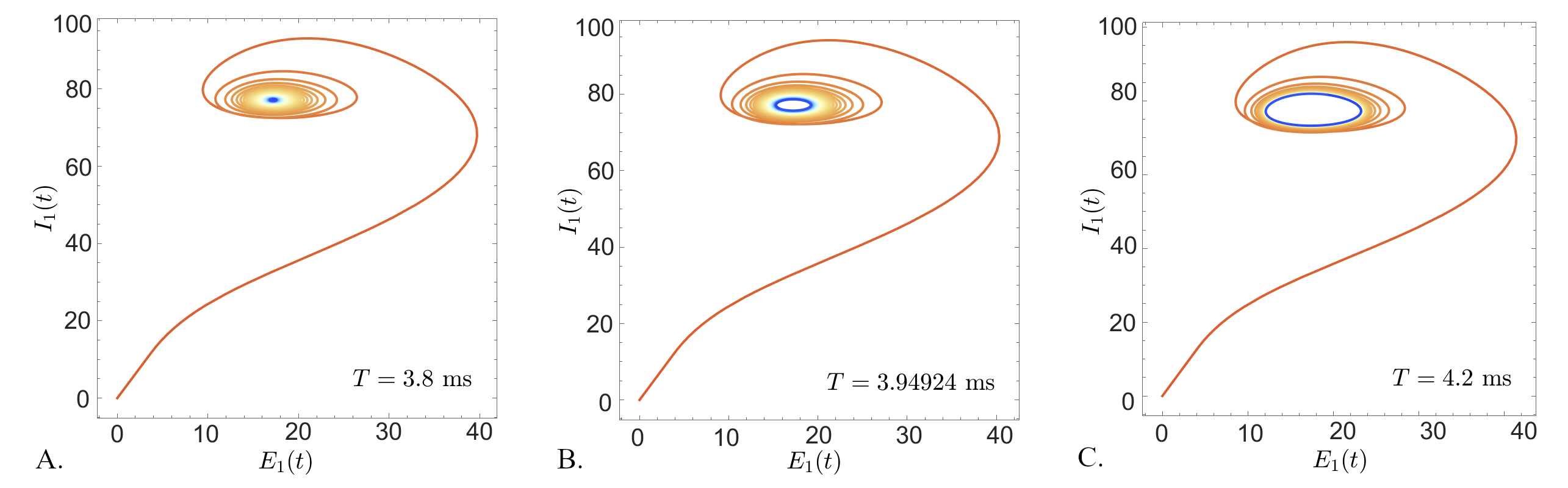}
\end{center}
\caption{\small \emph{{\bf Trajectory projection in the $(E_1,I_1)$ plane}, illustrating long-term behavior of the system with Dirac delays and fixed coupling parameters, for three different values of the delay, as follows: {\bf A.} for $T=3.8$ ms, the solution evolves towards a stable equlibrium; {\bf B.} at $T^*=3.94924$ ms, the system crosses a super-critical Hopf bifurcation, with formation of a stable cycle; {\bf C.} for $T=4.2$, the solution now settles to the stable cycle. All coupling strengths and other system parameters were fixed in all three cases to their table baseline values.}}
\label{simulation_Dirac}
\end{figure}

\begin{figure}[h!]
\begin{center}
\includegraphics[width=0.45\textwidth]{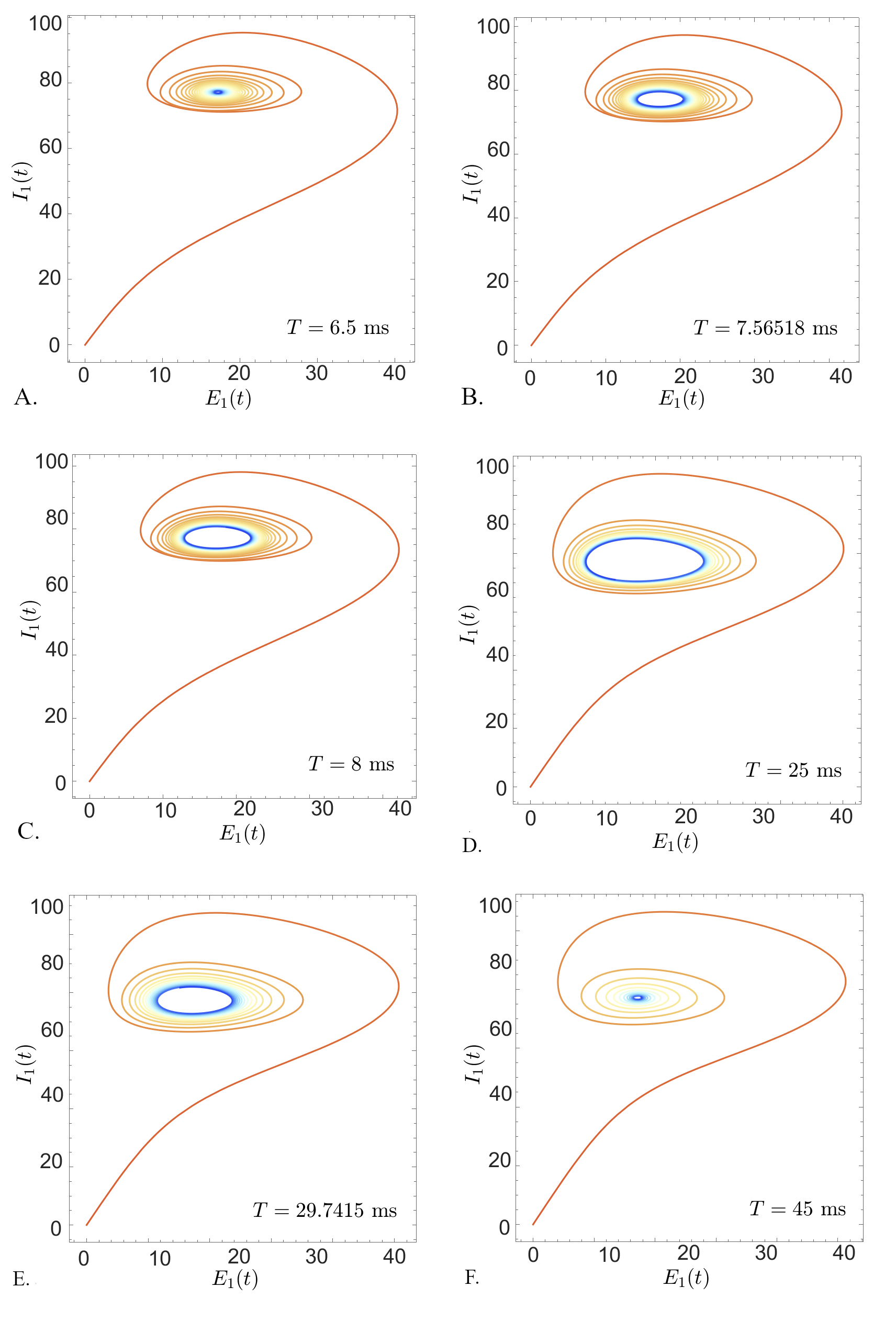}
\quad \quad
\includegraphics[width=0.45\textwidth]{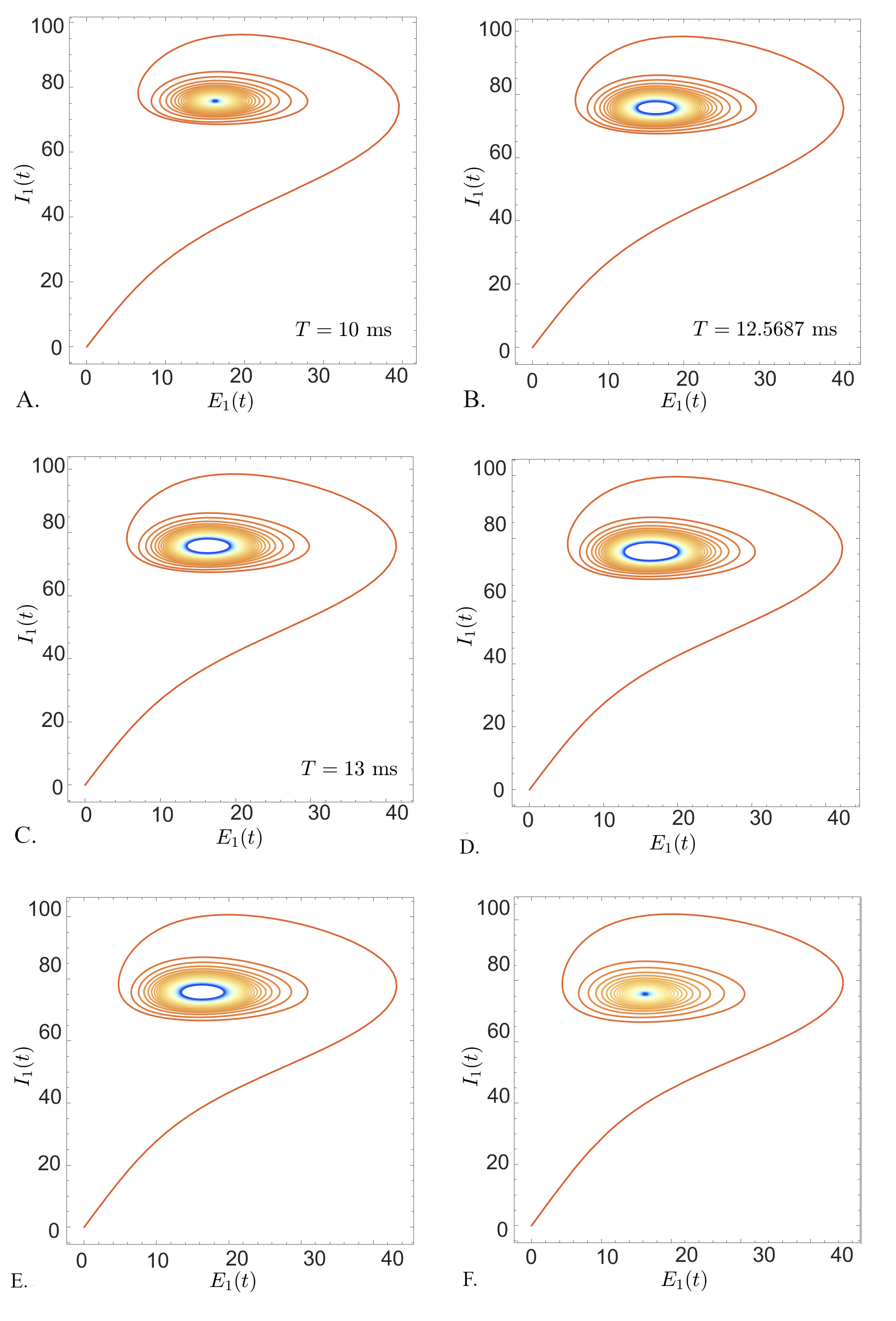}
\end{center}
\vspace{-5mm}
\caption{\small \emph{{\bf Trajectory projection in the $(E_1,I_1)$ plane}, illustrating long-term behavior of the system with weak Gamma distributed delays and fixed coupling parameters, for six different values of the delay. {\bf Left.} $w_{CS} = 6.6$, all other parameters fixed at table baseline values. The delay values are as follows: {\bf A.} $T=6.5$ ms, the solution evolves towards a stable equlibrium; {\bf B.} $T^{*}_{-}=7.56518$ ms, the system crosses a super-critical Hopf bifurcation, with formation of a stable cycle; {\bf C.} and {\bf D.} $T=8$ ms and $T=25$ ms, the solution settles to the stable cycle; {\bf E.} $T^*_{+}=29.7415$, the system goes through a subcritical Hopf bifurcation, with loss of oscillations; {\bf F.} $T=45$, the system again converges to a stable equilibrium. {\bf Right.} $W_{CS}=6.3$, all other parameters fixed at table baseline values. The delay values are as follows: {\bf A.} $T=10$ ms, the solution evolves towards a stable equlibrium; {\bf B.} $T^*_{-}=12.5687$ ms, the system crosses a super-critical Hopf bifurcation, with formation of a stable cycle; {\bf C.} and {\bf D.} $T=13$ ms and $T=15$ ms, the solution settles to the stable cycle; {\bf E.} $T^*_{+}=17.9016$, the system goes through a subcritical Hopf bifurcation, with loss of oscillations; {\bf F.} $T=25$, the system again converges to a stable equilibrium.}}
\label{simulation_weak}
\end{figure}

\subsubsection{Frequency analysis}

Finally, one of our main mathematical goals has been to understand how delays in processing inputs can underlie, through their length and their distribution, various mechanisms for triggering and stopping oscillations. Oscillations in firing rates are important in brain circuits in general, identifiable oscillatory rhythms being often associated with specific contexts or functional behaviors. This is true in particular for our circuit, since basal ganglia oscillations in the Beta range are considered a mark of dysregulation, and pathologies like Parkinson's Disease. We have thus far used our model to detect and analyze a few potential mechanisms for triggering and sustaining oscillations in the GPe-STN coupled unit. We found that such oscillations can be triggered by delays in input processing, with discrete delays being additionally oscillation prone versus weak Gamma distributed delays. Since many studies suggest that the functional and clinical importance of oscillations in this system are tied to the Beta range, we next examine whether our results remain true when we confine our analysis to entering and leaving the Beta band width. 

There are many different ways to illustrate the effects of changes in coupling parameters and changes in delay on oscillation frequency, once oscillations are triggered. We start with a simple illustration in Figure~\ref{WGS_WSC_freq_plane}, which shows (for each of the two delay kernels) the frequency band of the oscillations at their onset $T^{*}$, over the whole plane of coupling pairs $(W_{GS},W_{SC})$. The stability region in each case is shown in white, and the colors correspond to the different frequency bands, with the Beta range (12-30 Hz) in light orange. There are significant differences between the two panels. For the Dirac kernel, the frequency band at oscillation onset depends strongly on the point $(W_{GS},W_{SC})$ in the parameter plane: for values more central to the region of potential oscillations, where $T^{*}$ is the highest, the frequencies are also high ($>30$ Hz, corresponding to the Gamma band). Moving towards the boundary of the regions (i.e., closer to the stability region, or equivalently in the direction of decreasing $T^{*}$), the system has lower onset frequencies, traversing a large Beta range locus, then subsequently Alpha (8-12 Hz), Theta (4-8 Hz) and Delta ($<4$ Hz) bands. In contrast, for the weak Gamma distribution, the transition from the Beta band locus to the stability region is very abrupt, with the other bands hardly visible close to the boundary. 

This is interesting, especially in the context previously discussed of Dirac kernel promoting oscillations. For discrete delays, oscillations are possible  for a larger region in the $(W_{GS},W_{SC})$ plane, at lower delays and without a delay-based exit mechanism (allowing for persistent cycling with arbitrary large lags). This seemingly gives the weak Gamma the upper hand in precluding pathological behaviors. However, if our behavior of interest is Beta band oscillations, this advantage may have to be reassessed. 

\begin{figure}[h!]
\begin{center}
\includegraphics[width=0.95\textwidth]{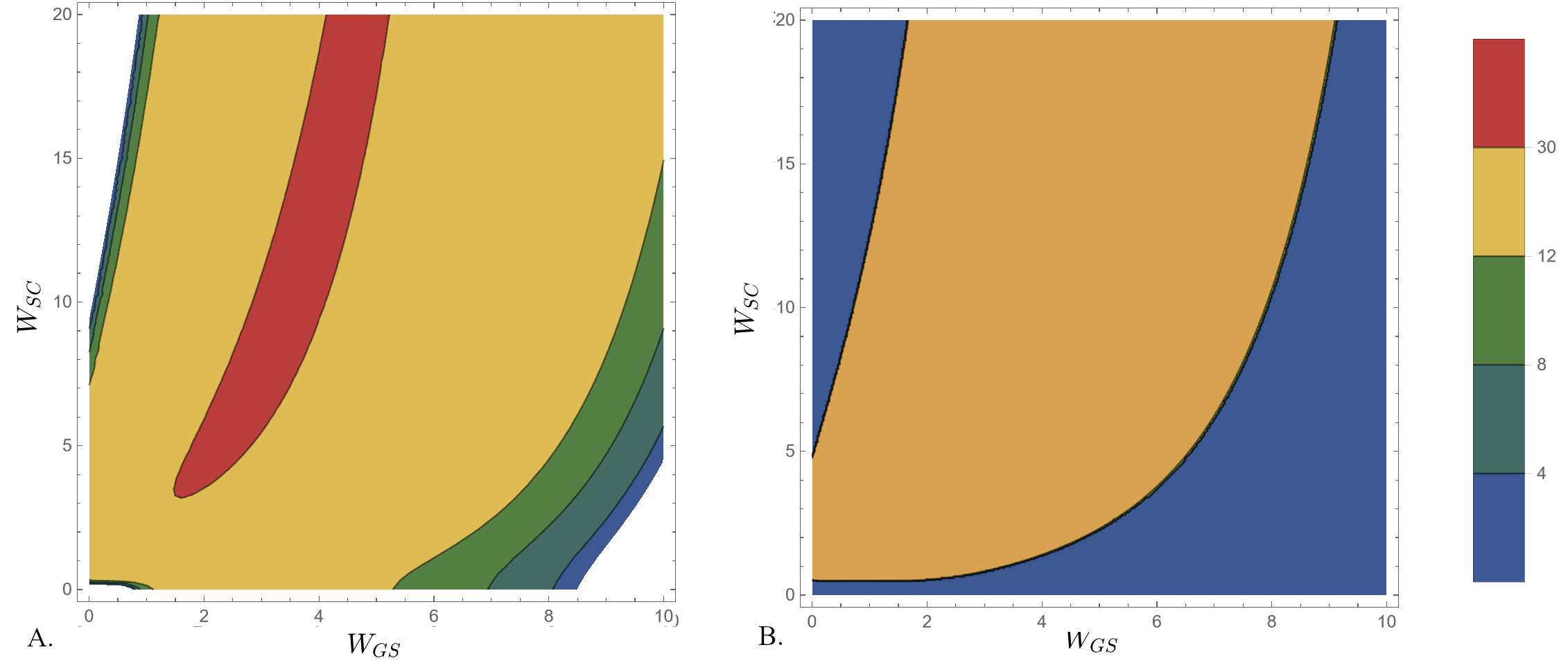}
\end{center}
\caption{\small \emph{{\bf Frequency bands at oscillation onset,} for the Dirac kernel ({\bf A}) and for the weak Gamma kernel ({\bf B}), shown in the $(W_{GS},W_{SC})$ parameter plane. In both panels, the respective stability region is shown in white, and the colors represent different frequency ranges, as specified in the color bar, from top to bottom: brown (gamma band, $>30$ Hz); orange (beta band, 12-30 Hz); green (alpha band, 8-12 Hz); dark green (theta band, 4-8 Hz); blue (delta band, $<4$ Hz).}}
\label{WGS_WSC_freq_plane}
\end{figure}

While it is still true that the Beta band locus is smaller for the weak Gamma case than for the Dirac case, Figure~\ref{WGS_WSC_freq_plane}b implies that this slight advantage comes with a downfall. Even small perturbations in coupling parameters (either $W_{SG}$ or $W_{SC}$) could push the system suddenly and without warning into Beta band oscillations, without transitioning through lower band regions. This could be clinically very important, in the context of being able to predict when the system may enter the pathological behavior associated with Beta oscillations. It generally supports our emphasis on the importance of obtaining information on the delay distribution type that the system is implementing.

%For a further comparison of the behavior under the two kernels, we show in Figure~\ref{} how the oscillation frequency changes in either case when $W_{GS}$ and $W_{SC}$ are increased for a fixed value of the delay $T$.

%%%%%%%%%%%%%%%%%%%
\subsection{Modeling oscillations in a prefrontal-limbic emotion regulatory circuit}

In this section, we will illustrate, as proof of principle, how our theoretical results can be applied to understand the mechanisms behind dynamic patterns in two large classes of circuits, shown in Figure~\ref{PFC_BLA_circuits}. However, the hardwiring of the network, together with the signs assigned to the connection weights (excitatory vs. inhibitory) can identify each model with specific examples of local circuits in the brain, and deliver testable hypotheses that could apply specifically to these circuits. 

For example, to fix our ideas, we can view the two example circuits are two potential candidate models for representing the main interactions between the prefrontal cortex and the basolateral amygdala. These interactions are known to be extremely significant to mechanisms of fear extinction and emotion control, and are believed to be responsible for emotional dysregulations such as anxiety, bipolar disorder and schizophrenia. It is therefore very important to understand how circuitry and connectivity perturbations can affect the circuit's functional dynamics. 

It is generally accepted in the neuroscience literature that the PFC and the BLA are connected via long-range glutamate-mediated projections. Projection neurons in the BLA comunicate inputs to the PFC, and pyramidal cells in the PFC send feedback projections back to the BLA. One empirically-documented way in which these connections are made is via  abundant populations of inhibitory (GABA-ergic) interneurons, the function and connectivity of which  has been intensely studied in both PFC~\cite{lee2017computational} and BLA~\cite{lee2017amygdala,mcgarry2017prefrontal}. More specifically, excitatory projections from the BLA end up exercising an efficient inhibitory control over the pyramidal (PYR) neurons in the PFC, by activating the subpopulations of parvalbumin-expressing
(PV-) and somatostatin-expressing (SST-) interneurons (INs)~\cite{yang2021prefrontal} which in turn inhibit PYR activity. Conversely, the PYR glutamatergic projections to the BLA have in fact an indirect inhibitory effect on the BLA projection neurons, due to their relay through BLA inhibitory INs~\cite{lee2017amygdala,mcgarry2017prefrontal}. This hypothesized local circuit or PFC-BLA interactions, which we will call MODEL A, corresponds schematically with our wiring scheme in Figure~\ref{circuits}c, and is also represented in Figure~\ref{PFC_BLA_circuits}a, contextualized to match the neural circuitry described above.

A second alternative discussed more recently in the neuroscience literature is that of direct connections between projection neurons in PFC and BLA. For example, a dye empirical study identified strong axonal projections from the BLA to the prelimbic PFC, and an overlap of PFC axons connecting back to amygdala-cortical neurons in the BLA~\cite{mcgarry2017prefrontal}. This led to the standing hypothesis that PFC inputs selectively contact neurons in the BLA which conversely project directly back to PFC pyramidal neurons. We represented and analyzed this connectivity scheme in MODEL B, which corresponds to the general hardwiring geometry in Figure~\ref{circuits}a, and is further illustrated in Figure~\ref{PFC_BLA_circuits}b in its physiological context.

For the rest of the section, we will use numerical simulations in conjunction with our theoretical results, to illustrate potential behaviors of MODEL A and B, and differences between them. We will focus in particular on identifying oscillation-triggering and stopping mechanisms, the role of which is known to be crucial in prefrontal-limbic dynamics. Gamma oscillations have been extensively studied in the BLA recently~\cite{schonfeld2019beyond,headley2021gamma},and are believed to be significant to emotional regulation and social behavior~\cite{kuga2022prefrontal}. It is not yet know whether the two schemes described in MODEL A and MODEL B act simultaneously, or if the system rather has to choose between one versus the other in order to deliver a certain behavior or function. In this section, we study them independently. Further coupling does not fall within the schematic options discussed in our general setup, and will not be discussed in this paper.

For simulations in both models, the integrating sigmoidal functions were chosen to be those in the original reference by Wilson and Cowan~\cite{wilson1972excitatory}, of the form
$${\cal F}(u) = \frac{1}{1+\exp{-b(u-\theta)}} - \frac{1}{1+\exp{b \theta}}$$
where the sigmoidal parameters for the excitatory nodes were $b_e=1.2$ and $\theta_e =4$, and for the inhibitory nodes were $b_i=1$ and $\theta_i=2$. The coupling parameters used for the numerical illustrations were also loosely based on the values and ranges in the same reference, and are specified in each simulation.

\begin{figure}[h!]
\begin{center}
\includegraphics[width=0.8\textwidth]{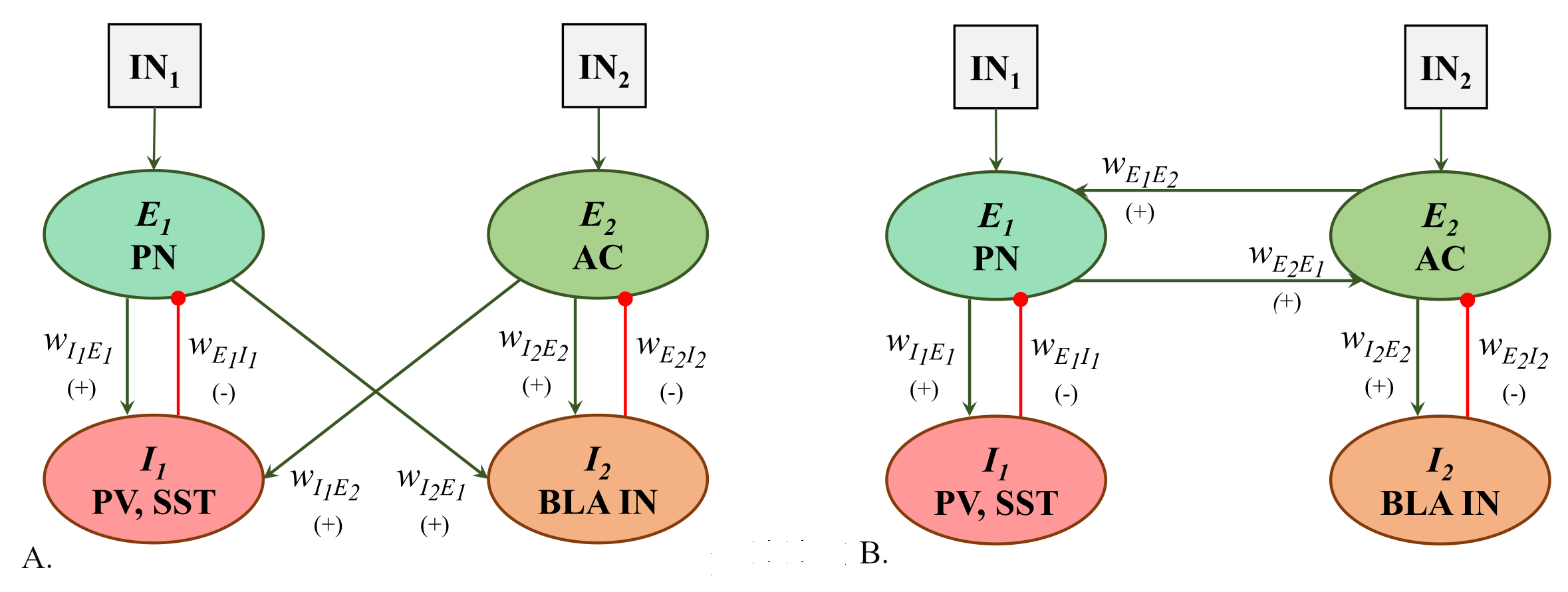}
\end{center}
\caption{\small \emph{{\bf Simplified prefrontal-amygdala coupling circuitry} as described in MODEL A (left) and MODEL B (right). The nodes/neural populations are in each case $PN$ (prefrontal pyramidal neurons, designated as the first excitatory population $E_1$; PV and SST (parvalbumin- and somatostatin-expressing  interneurons, representing the first inhbitory population $I_2$); AC (amygdala-cortical, long range projection neurons, representing the second excitatory population $E_2$); $BLA IN$ (amygdala gabaergic interneurons, the second inhibitory populations $I_2$). The weights and signatures of both local and long-range projections are represented by arrows (green for exicitatory, and red for inhibitory connections). External inputs IN$_1$ and IN$_2$ are received by the excitatory units $E_1$ and $E_2$, respectively.}}
\label{PFC_BLA_circuits}
\end{figure}

In both MODEL A and MODEL B, we considered the same E/I interactions (with the same coupling parameters) \emph{within} the BLA and the PFC. We assumed strong inhibition $\ds w_{_{E_1I_1}} =w_{_{E_2I_2}}$ from the GABAergic interneurons to the corresponding projection neurons. We also included excitatory feedback from the pyramidal cells to the PV and SST INs, as well as from the amygdala-cortical cells to the BLA INs. While such connections have been evidenced empirically, they don't seem to bring as strong a contribution to the functional circuitry in question, hence we assigned them relatively low baseline values $\ds w_{_{I_1E_1}}= w_{_{I_2E_2}}$ compared to the magnitude of the feed-forward inhibition. The key parameters in both models were taken to be the strengths of the glutamatergic projections between PFC and BLA. More precisely, $w_{_{I_2E_1}}$ and $w_{_{I_1E_2}}$ (in MODEL A), and $w_{_{E_2E_1}}$ and $w_{_{E_1E_2}}$ (in MODEL B) were varied for our analysis within a wide range of values. The specific values and ranges are provided in Table~\ref{PFC_BLA_params}.

 \renewcommand{\arraystretch}{1.5}
\begin{table}[h!]
\begin{center}
\begin{tabular}{|l|l|l|l|}
\hline
\multicolumn{2}{|c|}{\bf MODEL A} & \multicolumn{2}{|c|}{\bf MODEL B}\\
\hline
Parameter & Value/range & Parameter & Value/range\\
\hline 
$w_{_{I_1E_1}}$, $w_{_{I_2E_2}}$ & 2 & $w_{_{I_1E_1}}$, $w_{_{I_2E_2}}$ & 2 \\
$w_{_{E_1I_1}}$, $w_{_{E_2I_2}}$ & -16 & $w_{_{E_1I_1}}$, $w_{_{E_2I_2}}$  & -16 \\
$w_{_{I_2E_1}}$, $w_{_{E_2I_1}}$ & [0,10] & $w_{_{E_1E_2}}$, $w_{_{E_2E_1}}$ & [0,10] \\
IN$_{1,2}$ & 6 & IN$_{1,2}$ & 6 \\
\hline
\end{tabular}
\end{center}
\caption{\small \emph{{\bf Coupling parameter values and ranges for MODEL A and MODEL B.} }}
\label{PFC_BLA_params}
\end{table}

Figure~\ref{PFC_BLA_curves} illustrates the behavior of the system as one of the cross-coupling parameters is held fixed, while the other is being changed. Notice that, although the parameters are moving along the same interval, the curves described in the $(\alpha,\beta)$ plane are significantly different between models (due to the fact that $\alpha$ and $\beta$ are different functions with respect to the coupling weights between MODEL A and MODEL B, as shown in Table~\ref{table_coefficients}). In fact, the paths lie in widely different regions of the $(\alpha,\beta)$ domain (underneath versus above the parabola $\ds \beta = \frac{\alpha^2}{4}$), distinctly outlining behaviors and transitions into and out of oscillations triggered by different mechanisms. 

These behaviors and transitions are illustrated separately in Figure~\ref{PFC_BLA_curves} for the Dirac and the weak Gamma kernel (left versus right panels). In contrast with Figure~\ref{first_crit_delay}, here we show the critical value $\ttau^*$ over the plane $(\alpha,\beta)$ through a set of level curves, the colors representing the corresponding levels (as shown in the color bar). This way, it is easier to observe how a curve in $(\alpha,\beta)$ plane evolves through higher and then lower values of $\ttau^*$. In each case, we track two parameter trajectories obtained by fixing the strength of one of the cross-connections ($w_{_{E_2I1}}$ and $w_{_{E_2E_1}}$, respectively) and varying the other cross-connection strength ($w_{_{E_1I2}}$ and $w_{_{E_1E_2}}$, respectively). Notice that the trajectories for MODEL A (purple curves) evolve in a different region of the $(\alpha,\beta)$ plane (below the parabola $\beta = \frac{\alpha^2}{4}$, not shown) than the trajectories for MODEL B (blue curve), which lie above the parabola. 

In the case of Dirac kernel, the trajectories spend most of their course outside of the stability region. Increasing on of the cross-connectivity value takes both models along a unimodal path of gradually lower, and then increasingly higher critical delay values $\ttau^*$. For a Dirac system with fixed delay $\ttau$, this results in crossing in and out of oscillations, as the curve traverses the $\ttau=\ttau^*$ level twice in its course, with a more pronounced interval of oscillations for thick trajectories (where the fixed cross-connectivity is higher). This reflects the fact that, although the effect is similar in both models (in that oscillations are triggered and then stopped), this is accomplished through different mechanisms between the two models, involving different regions in the $(\alpha,\beta)$ plane, with different computational scheme for the critical $\ttau^*$ where oscillations appear.

\begin{figure}[h!]
\begin{center}
\includegraphics[width=\textwidth]{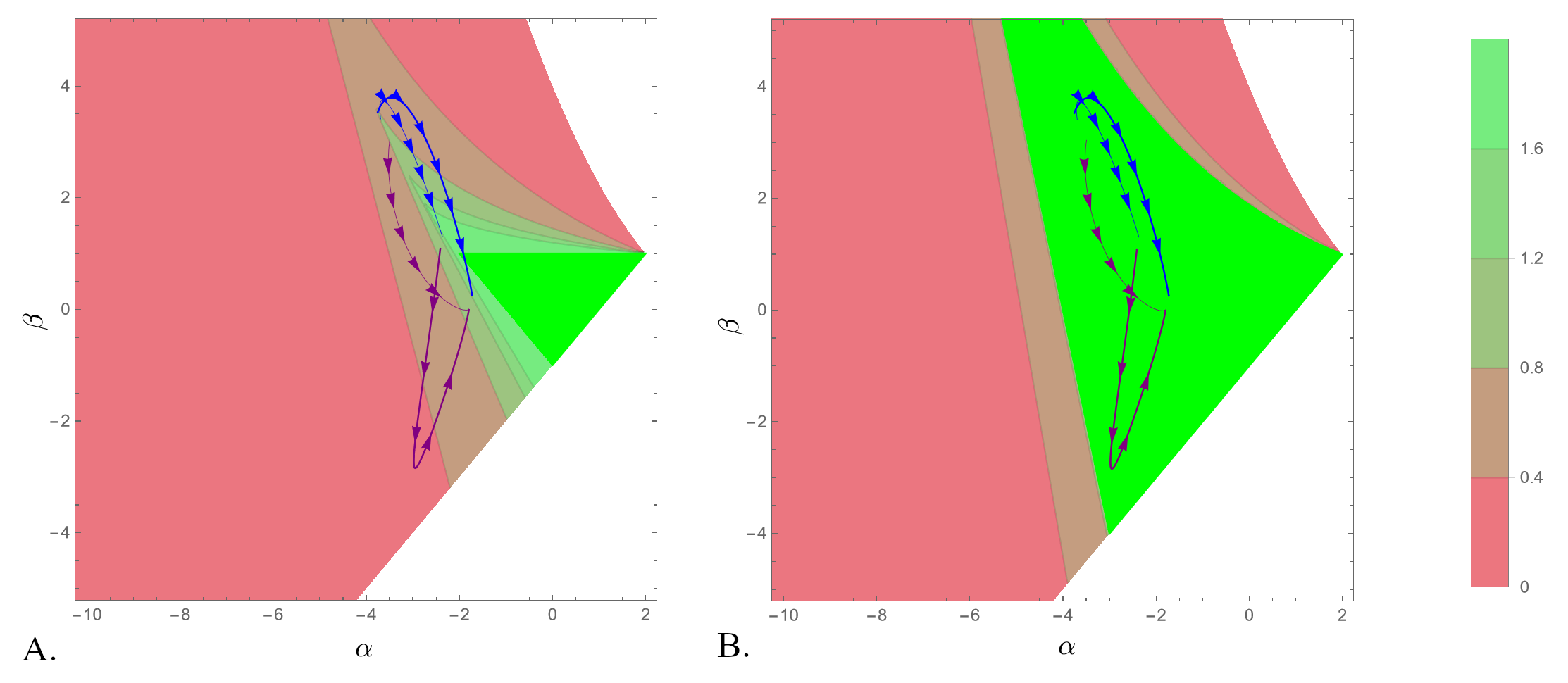}
\end{center}
 \caption{\small \emph{{\bf Trajectories in the $(\alpha,\beta)$ plane when one coupling weight is fixed and the other one is increased.} For MODEL A (purple curves), the thin curve represents $w_{_{E_2I_1}}=1$, and the thick curve represents $w_{_{E_2I_1}}=3$, with $w_{_{E_1I_2}}$ increasing in $[0,10]$. For MODEL B (blue curves), the thin curve represents $w_{_{E_2E_1}}=1$, and the thick curve represents $w_{_{E_2E_1}}=3$, with $w_{_{E_1E_2}}$ increasing in $[0,10]$.
 {\bf A.} The stability region and sample level curves for the Dirac kernel are shown (colored according to the bar on the right), to illustrate how the trajectories move across different values for the critical $\ttau*$ in the case of discrete delays. {\bf B.} In the corresponding picture for the weak Gamma kernel, the trajectories do not leave the stability region. }}
\label{PFC_BLA_curves}
\end{figure}

To better contextualize how the discrete delay value together with the values of the cross-connectivities, govern the behavior of the system -- we include two additional figures, in Appendices 2 and 3. The figures illustrate how the trajectory of the system from rest (initial condition $(E_1,I_1,E_2,I_2)=(0,0,0,0)$) changes as the delay $\ttau$ is increased, for different values of the connectivity parameters. The trajectories are represented as their projection in the $(E_1,I_1)$ slice of the four-dimensional parameter space. While in each case a cycle forms at the critical $\ttau^*$ corresponding to the respective parameter set, the position of the cycle, its geometry and kinetics are significantly different between the two models. This not only reiterates the importance of the connectivity pattern (hardwired structure) over connectivity weights (adjustable strengths), in determining the dynamic fate of the systems. It also suggests that an adequate mathematical perspective (e.g., identifying fundamentally different connectivity structures with different loci in the $(\alpha,\beta)$ plane) can help understand and predict the differences between the subsequent behaviors. 

\section{Discussion}

In this paper, we modeled and studied long-term dynamics in systems of coupled pairs of Wilson-Cowan oscillators with distributed delays, under different connectivity schemes, for different connection strengths, different delay distributions and average delay values. First, we worked on deriving general (i.e., structure and kernel independent) results. We then focused on understanding the particular contribution of each of these aspects to the system's dynamics and on how changes in these factors reflect into the system's behaviors and transitions.

Altogether, our analysis of this four-dimensional system of coupled Wilson-Cowan pairs supports the idea that the presence and type of distributed delays in input processing are a strong factor determining the system's long-term behavior. We had considered this idea in our prior work on the dynamics of a single Wilson-Cowan pair, showing how the choice of delay and of connection weights affect the system's transitions in and out of oscillations. The current context warrants a finer discussion, since the system under consideration is richer in terms of the size, number of parameters and coupling schemes. While this is advantageous from an applications perspective, allowing us to investigate behavior in more realistic brain circuits, it also requires a more complex mathematical analysis, that can consider the additional impact of configuration in conjunction with the contributions of the delays and coupling strengths. 

For the present paper, to fix our ideas, we considered four connectivity schemes simple enough to allow for convenient mathematical reductions (e.g., in the form of the characteristic equation of the linearized system), but rich enough to be realistically used to model important neuro-regulatory systems. As working examples, we applied our theoretical analysis to a basal ganglia circuit, in which Beta oscillations have been identified as a signature of PD symptoms, and to a prefrontal-limbic regulatory system, to illustrate potential mechanisms behind the Gamma oscillations measured in the basolateral amygdala.

%\section*{Conclusions}

\section*{Appendix 1}

{\bf Case 1.} Suppose we are at a point in the parameter region 
\begin{equation}
\frac{\alpha^2}{4} < \beta < \frac{(\alpha-4)^2}{4}, \; \alpha<2
\label{above_parabola}
\end{equation}
Then, oscillations can only be triggered through a Hopf bifurcation if equation~\eqref{omega_case1} has solutions. In the case of the weak Gamma kernel, $\ds \rho(\omega) = \frac{1}{\sqrt{\omega^2+1}}$ and $\theta(\omega) = \arctan \omega$, hence the equation becomes:
\begin{equation}
    \alpha = 2\sqrt{\beta} \cos \left[ 2 \left( \arctan \sqrt{\ds \frac{\sqrt{\beta}}{\omega^2+1}-1} + \arctan \omega \right) \right]
    \label{alpha_above_parabola}
\end{equation}

Using the trigonometric identity $\ds \arctan x + \arctan y = \arctan \frac{x+y}{1-xy}$, then the double angle formula $\cos(2x) = 2\cos^2 x-1$ in conjunction with $\ds \cos (\arctan x) = \frac{1}{\sqrt{1+x^2}}$, this can be simplified as:
\begin{equation}
    \alpha = f(\omega) = - \left[ \sqrt{\beta} \left( -2 + \frac{4}{1+\omega^2} \right) + 4\left( -1+\omega^2+2\omega\sqrt{\ds -1+\frac{\sqrt{\beta}}{1+\omega^2}} \right) \right]
    \label{alpha_above}
    \end{equation}

Clearly, $\alpha = f(\omega)$ has solutions only if $\alpha$ is in the range of $f(\omega)$, which we will determine by carrying out a classical optimization computation. First, notice that the existence of the radical restricts the domain of $f(\omega)$ to $0 \leq \omega \leq \omega_{\max} = \sqrt{\sqrt{\beta}-1}$, which is nontrivial only if $\beta>1$ (which is in fact in agreement with the conditions in Theorem~\ref{stability_theorem}). The values of $f$ at the endpoints of this interval are:
$$f(0) = f(\omega_{\max}) = 4-2\sqrt{\beta}$$

We then need to find the critical points of $f(\omega)$. Three critical points are theoretically possible:
\begin{eqnarray*}
u_{1,3} &=& \frac{-1+\sqrt{\beta}\mp \sqrt{-4\sqrt{\beta}+\beta}}{\sqrt{2}}\\
u_2 &=& \sqrt{-1+\sqrt[4]{\beta}}
\end{eqnarray*}

A symbolic analysis and classification of this points, distinguished the following cases:\\

\noindent {\bf Case A: $1<\beta<16$.} Then $u_2$ is the unique critical point, and it is a local minimum, with $f(u_2) = 2(8-8\sqrt[4]{\beta}+\sqrt{\beta}) < f(0)$. Hence $f(u_2) < \alpha < f(0)$. In conjunctions with the original conditions~\ref{above_parabola} and the restriction on $\beta$ in this case, we obtain the parameter region described by:
\begin{equation*}
1<\beta<16,\quad 2(8-8\sqrt[4]{\beta}+\sqrt{\beta}) < \alpha < 4-2\sqrt{\beta}
    \label{alpha_case1}
\end{equation*}

\noindent {\bf Case B: $16<\beta<81$}. In this case, all three critical points are real, and $f(0)>f(u_2)>f(u_{1,3})$, hence $f(u_1) < \alpha < f(0)$. Together with the other conditions, this leads to the parameter region:
\begin{equation*}
16<\beta<81,\quad 0 < \alpha < 4-2\sqrt{\beta}
    \label{alpha_case2}
\end{equation*}

\noindent {\bf Case C. $\beta>81$.} The condition makes $u_2$ a local maximum, and $u_{1,3}$ local minima. Then $f(u_1)<\alpha<f(u_2)$. Together with the other conditions, we get:
\begin{equation*}
\beta>81,\quad 0 < \alpha < 4-2\sqrt{\beta}
    \label{alpha_case3}
\end{equation*}

Taken together, these cases imply that the equation~\eqref{alpha_above_parabola} has no solutions under the conditions~\ref{above_parabola} if 
\begin{equation}
1<\beta<16, \quad -2\sqrt{\beta} < \alpha < 2(8-8\sqrt[4]{\beta}+\sqrt{\beta})
\label{region_above_parabola}
\end{equation}

Above the parabola and outside of the region~\eqref{region_above_parabola}, the equation in $\omega$~\eqref{alpha_above} can have either two or four real roots. More specifically, between the curves $\alpha = 2(8-8\sqrt[4]{\beta}+\sqrt{\beta})$ and $\ds \beta = \frac{(\alpha-4)^2}{4}$ (gray region in Figure~\ref{regions}), equation~\eqref{alpha_above} has only two positive roots 
\begin{eqnarray}
    \omega_{\mp} &=& \frac{\sqrt{-8-\alpha \mp \sqrt{\left( \ \alpha-2\sqrt{\beta} \right) \left( 16+\alpha -8\sqrt{\alpha+2\sqrt{\beta}} -2\sqrt{\beta} \right)} + 4\sqrt{\alpha+2\sqrt{\beta}} + 2\sqrt{\beta}}}{2\sqrt{2}}
    \label{gray_region}
\end{eqnarray}

with so that $\omega_{-}<\sqrt{-1+\sqrt[4]{\beta}}<\omega_{+}$. In the region between $\ds \beta = \frac{\alpha^2}{4}$ and $\ds \alpha = 2(8-8\sqrt[4]{\beta}+\sqrt{\beta})$ (shown in pink in Figure~\ref{regions}), the equation has two additional positive roots, given by 
\begin{eqnarray}
    v_{\mp} &=& \frac{\sqrt{-8-\alpha \mp \sqrt{\left( \ \alpha-2\sqrt{\beta} \right) \left( 16+\alpha +8\sqrt{\alpha+2\sqrt{\beta}} -2\sqrt{\beta} \right)} - 4\sqrt{\alpha+2\sqrt{\beta}} + 2\sqrt{\beta} }}{2\sqrt{2}}
    \label{pink_region}
\end{eqnarray}

so that $\omega_{-}<v_{-}<\sqrt{-1+\sqrt[4]{\beta}}<v_{+}<\omega_{+}$.

\begin{figure}[h!]
\begin{center}
\includegraphics[width=0.6\textwidth]{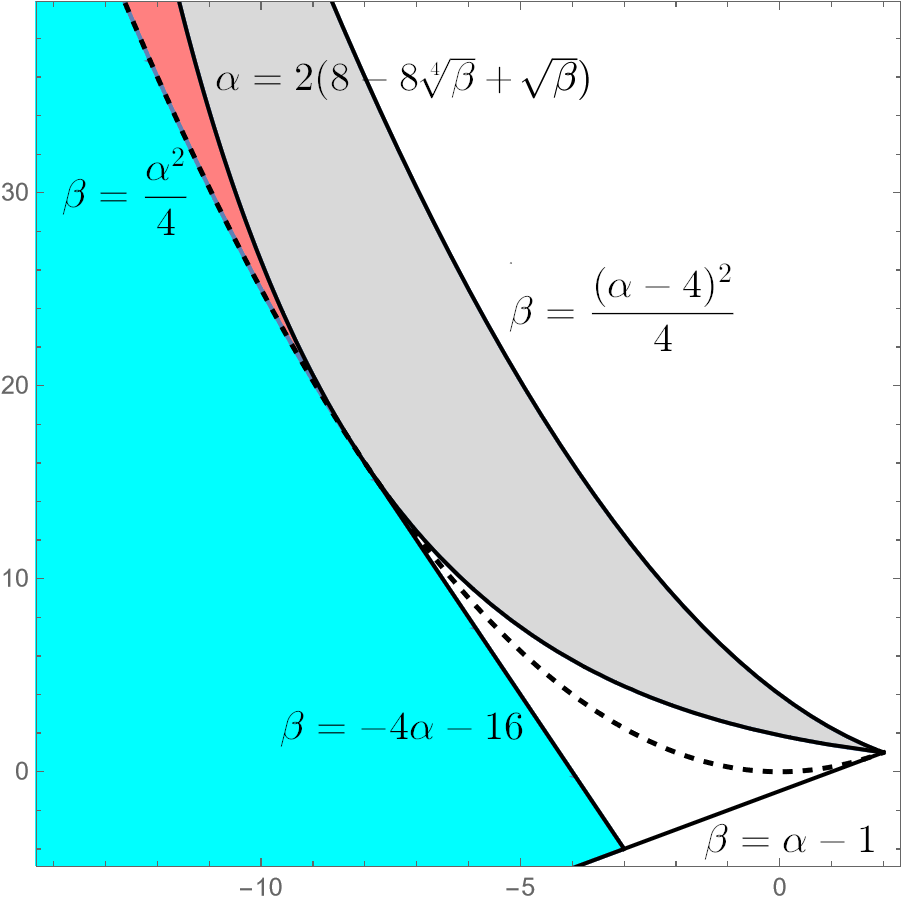}
\end{center}
\caption{\small \emph{{Regions in the $(\alpha,\beta)$ parameter plane} corresponding to different successions and behaviors of critical delays. In the grey region and cyan regions, there are two critical delays $\ttau^*(\omega_{-})<\ttau^*(\omega_{+})$, corresponding to the $\omega_{\pm}$ obtained in equations~\eqref{gray_region} and equations~\eqref{cyan_region}, respectively. In the pink regions, there are four critical delays, with the lowest values $\ttau^*(\omega_{-})$ provided by equation~\eqref{pink_region}.}}
\label{regions}
\end{figure}

\vspace{2mm}
{\bf Case 2.} Suppose now that we are at a point in the parameter region 
\begin{equation}
\beta < \frac{\alpha^2}{4}, \: \alpha<2
\label{below_parabola}
\end{equation}

Here, oscillations can only be triggers when equation~\eqref{omega_case2} has solutions. In the case of the weak Gamma kernel, this equation is:
\begin{equation*}
    \frac{\omega}{\omega^2+1} = \frac{2}{\lvert \alpha - \sqrt{\alpha^2-4\beta}\rvert}
\end{equation*}

This equation has no real roots when $-8 + \lvert \alpha - \sqrt{\alpha-4\beta} \rvert < 0$. In this case, there is no critical $\tau^*$ that can trigger oscillations, hence the equilibrium is stable for all delays. Together with condition~\eqref{below_parabola}, this is the parameter region 
\begin{equation}
-8 < \alpha <2, \: -16- 4\alpha < \beta < \frac{\alpha^2}{4}
\label{region_below_parabola}
\end{equation}

Below the parabola~\eqref{below_parabola} and outside of the region~\eqref{region_below_parabola}, the equation has two roots:
\begin{equation}
\omega_{\pm} = \frac{\pm\sqrt{-8 + \lvert \alpha - \sqrt{\alpha^2-4\beta}\rvert} + \sqrt{\lvert \alpha - \sqrt{\alpha^2-4\beta}\rvert}}{2\sqrt{2}}
\label{cyan_region}
\end{equation}
In the region specified, $0< \omega_{-} < 1 < \omega_{+}$. 

In particular, this computation provides us with a stability parameter region for the case of the weak Gamma kernel (emerging from joining conditions~\eqref{region_above_parabola} and~\eqref{region_below_parabola} that extends substantially over the original, kernel independent, stability locus, and that can be described as the region enclosed by 
 the line $\beta = -4\alpha-16$, the parabola $\alpha = 2 \left( 8 - 8 \sqrt[4]{\beta} + \sqrt{\beta} \right)$ and the line $\beta = \alpha -1$.

 \begin{landscape}
 \section*{Appendix 2A}

\begin{figure}[h!]
\begin{center}
\includegraphics[width=0.9\linewidth]{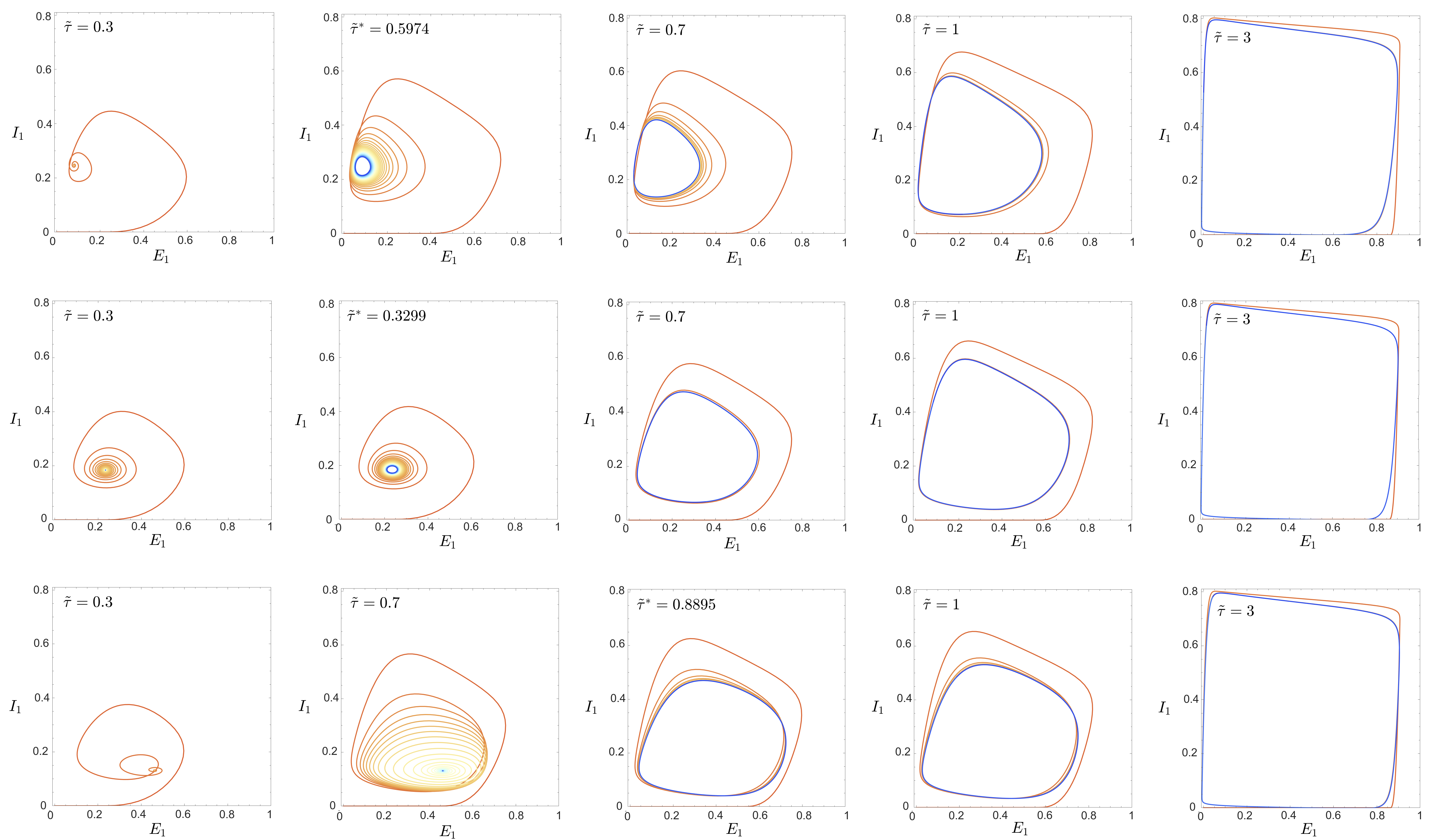}
\end{center}
\caption{\small \emph{{\bf Trajectory projection in the $(E_1,I_1)$ plane, illustrating the long-term behavior of MODEL A with Dirac discrete delays}, for different balances of $w_{_{E_1I_2}}$ versus $w_{_{E_2I_1}}$, and different delay values. For all simulations, the former was fixed to $w_{_{E_1I_2}}=3$, and the latter was changed from $w_{_{E_2I_1}}=1$ ({\bf top} row) to $w_{_{E_2I_1}}=3$ ({\bf middle} row) to $w_{_{E_2I_1}}=5$ ({\bf bottom} row). For each row, five delay values $\ttau$ were explored: $\ttau=0.3$, $\ttau=\ttau^*$ (computed based on the corresponding case), $\ttau=0.7$, $\ttau=1$ and $\ttau=3$. This illustrates the formation of the stable cycle at $\ttau^*$ and its subsequent evolution.}}
\label{ModelA_sim}
\end{figure}

\clearpage
 \section*{Appendix 2B}
 
\begin{figure}[h!]
\begin{center}
\includegraphics[width=0.9\linewidth]{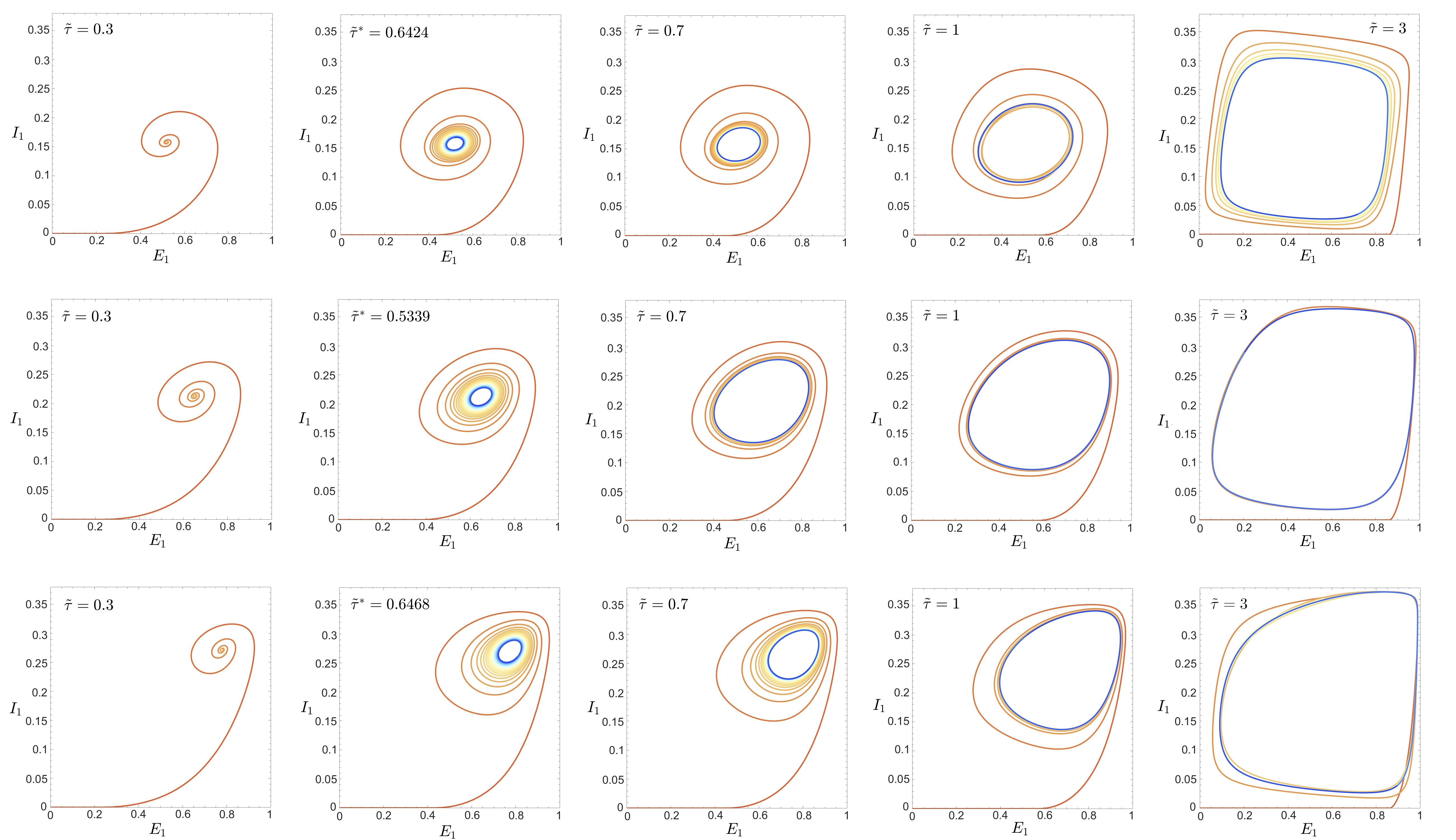}
\end{center}
\caption{\small \emph{{\bf Trajectory projection in the $(E_1,I_1)$ plane, illustrating the long-term behavior of MODEL B with Dirac discrete delays}, for different balances of $w_{_{E_1E_2}}$ versus $w_{_{E_2E_1}}$, and different delay values. For all simulations, the former was fixed to $w_{_{E_1E_2}}=3$, and the latter was changed from $w_{_{E_2E_1}}=1$ ({\bf top} row) to $w_{_{E_2E_1}}=3$ ({\bf middle} row) to $w_{_{E_2E_1}}=5$ ({\bf bottom} row). For each row, five delay values $\ttau$ were explored: $\ttau=0.3$, $\ttau=\ttau^*$ (computed based on the corresponding case), $\ttau=0.7$, $\ttau=1$ and $\ttau=3$. This illustrates the formation of the stable cycle at $\ttau^*$ and its subsequent evolution.}}
\label{ModelB_sim}
\end{figure}
     
 \end{landscape}

\bibliography{distribdelays}

\end{document}